\newcounter{counter}[section] 
\newtheorem{theorem}[counter]{Theorem}
\newtheorem{lemma}[counter]{Lemma}
\newtheorem{corollary}[counter]{Corollary}
\newtheorem{proposition}[counter]{Proposition}
\theoremstyle{definition} 
\newtheorem{definition}[counter]{Definition}
\newtheorem{situation}[counter]{Situation}
\newtheorem{heuristic}[counter]{Heuristic}
\newtheorem{example}[counter]{Example} 
\newtheorem{notation}[counter]{Notation}
\newtheorem{remark}[counter]{Remark}
\definecolor{santicolor}{RGB}{0, 159, 183} 
\definecolor{headercolor}{rgb}{0.83, 0.83, 0.83} 
\definecolor{caribbeangreen}{rgb}{0.0, 0.8, 0.6}
\definecolor{acolor}{rgb}{0.01, 0.75, 0.24}
\definecolor{bcolor}{RGB}{0, 159, 183}
\definecolor{ccolor}{RGB}{254, 74, 73}
\definecolor{linkcolor}{rgb}{0.0, 0.0, 0.61}
\definecolor{codegreen}{rgb}{0,0.6,0}
\definecolor{codegray}{rgb}{0.5,0.5,0.5}
\definecolor{codepurple}{rgb}{0.58,0,0.82}
\definecolor{backcolour}{rgb}{0.95,0.95,0.92}
\lstdefinestyle{mystyle}{backgroundcolor=\color{backcolour}, commentstyle=\color
 {codegreen}, keywordstyle=\color{magenta}, numberstyle=\tiny\color{codegray}, stringstyle=\color{codepurple}, basicstyle=\ttfamily\footnotesize, breakatwhitespace=false, breaklines=true, captionpos=b, keepspaces=true, numbers=left, numbersep=4pt, showspaces=false, showstringspaces=false, showtabs=false, tabsize=2}
\newcommand{\R}{\mathbb{R}} 
\newcommand{\Z}{\mathbb{Z}} 
\newcommand{\Q}{\mathbb{Q}} 
\newcommand{\QQ}{\Q} 
\newcommand{\C}{\mathbb{C}} 
\newcommand{\Qbar}{{\bar{\Q}}} 
\newcommand{\kbar}{{\bar{k}}} 
\newcommand{\A}{\mathbb{A}} 
\newcommand{\kk}{{\mathbf k}} 
\renewcommand{\P}{\mathbb{P}} 
\newcommand{\Pone}{\P^1} 
\newcommand{\XX}{\mathscr{X}} 
\newcommand{\mcT}{\mathcal{T}} 
\newcommand{\mcS}{\mathcal{S}} 
\newcommand{\mcR}{\mathcal{R}} 
\newcommand{\mcU}{\mathcal{U}} 
\newcommand{\mcV}{\mathcal{V}} 
\newcommand{\belyi}{Belyi } 
\newcommand\HH{\mathrm{H}}
\newcommand{\coeff}{\mathbf{sfc}} 
\newcommand{\tri}{\triangle} 
\newcommand{\tribar}{\bar\triangle} 
\newcommand{\x}{\mathsf{x}}
\newcommand{\y}{\mathsf{y}}
\newcommand{\z}{\mathsf{z}}
\renewcommand{\t}{\mathsf{t}}
\newcommand{\s}{\mathsf{s}}
\newcommand{\abc}{(a,b,c)}
\renewcommand{\leq}{\leqslant}
\renewcommand{\geq}{\geqslant}
\newcommand{\md}{\operatorname{mod}} 
\newcommand{\ideal}[1]{\langle #1 \rangle} 
\newcommand{\paren}[1]{\left( #1 \right)} 
\newcommand{\cdef}[1]{{\color{linkcolor}\textsf{#1}}} 
\newcommand{\brk}[1]{\left\{ #1 \right\}} 
\renewcommand{\th}{\text{th}}
\newcommand{\gfe}{A\x^{a} + B\y^{b} + C\z^{c}}
\newcommand{\mif}{\text{ if }} 
\renewcommand{\emptyset}{\varnothing}
\DeclareMathOperator{\Spec}{Spec} 
\DeclareMathOperator{\Proj}{Proj} 
\DeclareMathOperator{\Gal}{Gal} 
\DeclareMathOperator{\Aut}{Aut} 
\DeclareMathOperator{\Autsch}{\mathbf{Aut}} 
\let\num\relax
\DeclareMathOperator{\num}{num}
\DeclareMathOperator{\den}{den}
\DeclareMathOperator{\PGL}{PGL}
\DeclareMathOperator{\Ht}{Ht}
\DeclareMathOperator{\vol}{vol} 
\title[Counting primitive solutions]{Counting primitive integral solutions to spherical generalized Fermat equations}
\author{Santiago Arango-Pi{\~n}eros}
\address{Department of Mathematics, University of Massachusetts Amherst, Amherst, MA 01003, USA}
\email{santiago.arango.pineros@gmail.com}
\urladdr{\url{https://sarangop1728.github.io/}}
\begin{document}

\begin{abstract}
  A solution $(x,y,z) \in \mathbb{Z}^3-\brk{(0,0,0)}$ to a generalized Fermat equation
\begin{equation}
    \label{eq:abstract-gfe}
    \gfe = 0,
\end{equation}
is called \emph{primitive} if $\gcd(x,y,z) = 1$. By work of Beukers
\cite{Beukers98}, we know that in the \emph{spherical} regime (that is, when
the Euler characteristic
$\chi = \tfrac{1}{a} + \tfrac{1}{b} + \tfrac{1}{c} - 1$ is positive), if
\Cref{eq:abstract-gfe} has one primitive solution, then it has infinitely many.
In this work, we use the method of \emph{Fermat descent}, as employed by
Poonen--Schaefer-Stoll \cite{PoonenSchaeferStoll07}, to refine Beukers' result
to an asymptotic count of the number of primitive integral solutions of bounded
height. 
\end{abstract}

\maketitle
\hfill \emph{MSC class: 11D41}
\hrule
\vspace{-5mm}
\setcounter{tocdepth}{1}
\tableofcontents
\vspace{-11mm}


\section{Introduction}
\label{sec:introduction}

\subsection{Poonen's heuristic}
\label{sec:heuristic} We follow \cite{Poonen_slides2006}. Let $a,b,c$ be
positive integers, and consider the following subset of the rational points on
the projective line $\Pone(\Q) \cong \Q \cup \brk{\tfrac10}$.
\begin{align}
  \label{eq:Omega-abc}
  \Omega\abc \colonequals \brk{Q \in \Pone(\Q) :
  \begin{array}{l}
    \mathrm{(i)}~ \num(Q) \text{ is an $a^\th$ power}, \\
    \mathrm{(ii)}~ \num(Q-1) \text{ is a $b^\th$ power}, \\
    \mathrm{(iii)}~ \den(Q) \text{ is a $c^\th$ power}. 
  \end{array}
  }.
\end{align}
By the \cdef{numerator} and \cdef{denominator} of a point
$Q = (s:t) \in \Pone(\Q)$, we mean the first and second coordinate, respectively,
of a representative point $\pm (s,t) \in \Z^2$ with $\gcd(s,t) = 1$. This pair is
only well defined up to sign. We say that an integer $m$ is an \cdef{$n^\th$
  power} if the ideal $m\Z$ equals $e^n\Z$ for some $e \geq 0$. In particular,
$0,1,\infty \in \Omega\abc$.

To any subset $\Omega \subseteq \Pone(\Q)$ we associate the subset of points of
bounded height, and the corresponding counting function. Given $h$ positive, define
\begin{align}
  \Omega_{\leq h} \colonequals \brk{Q \in \Omega : \Ht(Q) \leq h}, \quad
  N(\Omega; h) \colonequals \# \Omega_{\leq h},
\end{align}
where $\Ht\colon \Pone(\Q) \to \Z_{\geq 0}$ is the usual multiplicative height,
given by
\begin{equation}
  \label{eq:P1-height}
  \Ht(Q) = \max\brk{|\num(Q)|,|\den(Q)|}.
\end{equation}

\begin{heuristic}
  \label{heuristic:poonen}
  We estimate the probability that a uniformly random rational
  number of height not exceeding $h \gg 0$ is in the set $\Omega\abc$. We do this under
  the heuristic assumption that the events (i), (ii), and (iii) defining
  $\Omega\abc$ in \Cref{eq:Omega-abc} are \emph{independent}.

  We have that
  \begin{align*}
    \dfrac{\#\brk{Q \in \Pone(\Q)_{\leq h} \colon \num(Q) \text{ is an $a^\th$
    power}}}{\# \Pone(\Q)_{\leq h}} &\doteq \dfrac{h\cdot h^{1/a}}{h^2} =
                                      h^{-1+1/a}, \\
    \dfrac{\#\brk{Q \in \Pone(\Q)_{\leq h} \colon \num(Q-1) \text{ is an $b^\th$
    power}}}{\# \Pone(\Q)_{\leq h}} &\doteq \dfrac{h\cdot h^{1/b}}{h^2} =
                                      h^{-1+1/b}, \\
    \dfrac{\#\brk{Q \in \Pone(\Q)_{\leq h} \colon \den(Q) \text{ is an $c^\th$
    power}}}{\# \Pone(\Q)_{\leq h}} &\doteq \dfrac{h\cdot h^{1/c}}{h^2} =
                                      h^{-1+1/c}, 
  \end{align*}
  where the notation $f(h) \doteq g(h)$ means that there exists an implicit
  constant $\kappa > 0$ such that $f(h) = \kappa\cdot g(h)$ as $h \to \infty$. The
  independence assumption implies that
  \begin{align*}
    \dfrac{\#\Omega\abc}{\# \Pone(\Q)_{\leq h}} \doteq
    \paren{h^{-1+1/a}}\paren{h^{-1+1/b}}\paren{h^{-1+1/c}} \doteq h^{-3+1/a+1/b+1/c}.
  \end{align*}
The heuristic above suggests that the \cdef{Euler characteristic}
  \begin{equation}
    \label{eq:characteristic}
    \chi\abc \colonequals \tfrac1a + \tfrac1b + \tfrac1c -1
  \end{equation}
\end{heuristic}
\noindent forces $\Omega\abc$ to be
\begin{align*}
  \begin{cases}
    \text{infinite}, & \mif \chi\abc > 0, \text{ and} \\
    \text{finite}, & \mif \chi\abc < 0.
  \end{cases}
\end{align*}
This prediction turns out to be correct. The \cdef{hyperbolic} case (when
$\chi < 0$) can be deduced from a theorem of Darmon and Granville \cite[Theorem
2]{Darmon&Granville95}. The \cdef{spherical} case (when $\chi > 0 $ and therefore
the multiset $\{a,b,c\}$ is one of $\{2,3,3\}, \{2,3,4\}, \{2,3,5\},$ or
$\{2,2,c\}$ for $c \geq 2$) the heuristic suggests that in the spherical case one
has $N(\Omega\abc;h) \asymp h^{\chi}$, as $h$ tends to infinity. This can be deduced from a
theorem of Beukers \cite[Theorem 1.2]{Beukers98}.

\subsection{Results}
\label{sec:intro-main-theorem} Our first result confirms the prediction of
\Cref{heuristic:poonen}.
\begin{theorem}
  \label{thm:main-belyi} Suppose that $a,b,c > 1$ and that
  $\chi \colonequals \chi\abc > 0$. Then, there
  exists an explicitly computable constant $\kappa\abc >0$ such that for every
  $\varepsilon > 0$, 
  \begin{align*}
    N(\Omega\abc; h) = \kappa\abc\cdot h^{\chi} + O(h^{\chi/2 + \varepsilon}), 
  \end{align*}
  as $h \to \infty$. The implicit constant depends on $\abc$ and
  $\varepsilon$.
\end{theorem}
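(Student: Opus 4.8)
The plan is to turn the count into a sieved lattice-point count inside a homogeneously expanding planar region, after first replacing $\Omega\abc$ by the image of a finite family of \belyi parametrizations. Concretely, I would invoke Beukers' theorem \cite{Beukers98} in its \belyi-map reformulation: in the spherical regime there is a \emph{finite} index set $I$ and, for each $i \in I$, a degree-$N$ \belyi map $\beta_i \colon \Pone \to \Pone$ ramified only above $\brk{0,1,\infty}$ with ramification indices $a,b,c$, where $N = N\abc$ is the order of the associated spherical (von Dyck) triangle group. An Euler-characteristic computation gives $N = 2/\chi$ (so $N = 2n,12,24,60$ for $(2,2,n),(2,3,3),(2,3,4),(2,3,5)$), the numerical coincidence that converts the exponent $2/N$ below into $\chi$. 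Writing $\beta_i = [G_i : K_i]$ with $G_i,K_i \in \Z[\s,\t]$ coprime and homogeneous of degree $N$, the ramification shape forces $G_i = c_0 g_i^a$, $K_i = c_\infty k_i^c$, and $G_i - K_i = c_1 m_i^b$, so that $\beta_i$ maps primitive pairs into $\Omega\abc$; the substance of the descent is the converse, that up to a bounded set (the points $0,1,\infty$ and their relatives) every $Q \in \Omega\abc$ equals $\beta_i(u)$ for a \emph{unique} pair $(i,u)$ with $u \in \Pone(\Q)$. This yields
\begin{equation*}
  N(\Omega\abc; h) = \sum_{i \in I} \#\brk{u \in \Pone(\Q) : \Ht(\beta_i(u)) \leq h} + O(1).
\end{equation*}

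Next I would make the height comparison explicit. For $u = (s_0 : t_0)$ with $\gcd(s_0,t_0)=1$ one has $\Ht(\beta_i(u)) = \max\brk{|G_i(s_0,t_0)|,|K_i(s_0,t_0)|}/d$, where $d = \gcd(G_i(s_0,t_0),K_i(s_0,t_0))$ divides the nonzero resultant $\Res(G_i,K_i)$ and so ranges over a fixed finite set of divisors; fixing the value of $d$ cuts out a union of congruence classes of computable density among primitive pairs. The height bound then reads $(s_0,t_0) \in R^{(i)}_{dh}$, where $R^{(i)}_h \colonequals \brk{(x,y) \in \R^2 : \max(|G_i(x,y)|,|K_i(x,y)|) \leq h}$. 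Since $G_i,K_i$ are coprime forms of degree $N$ they have no common real zero off the origin, so $R^{(i)}_1$ is bounded with piecewise-smooth (hence Lipschitz) boundary and nonempty interior, and it expands homogeneously: $R^{(i)}_h = h^{1/N} R^{(i)}_1$ with $\vol(R^{(i)}_h) = h^{2/N}\vol(R^{(i)}_1)$.

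The analytic core is then a standard count of primitive lattice points in a homogeneously expanding Lipschitz domain. Möbius inversion over the coprimality condition gives
\begin{equation*}
  \#\brk{(s_0,t_0) \in \Z^2 \text{ primitive} : (s_0,t_0) \in R^{(i)}_{dh}}
  = \frac{\vol(R^{(i)}_1)}{\zeta(2)}\,(dh)^{2/N} + O\!\paren{h^{1/N}\log h},
\end{equation*}
because the boundary contributes $O(\lambda)$ for a dilation factor $\lambda = (dh)^{1/N}$ (Davenport's lemma) and summing the sieve error over $d' \leq \lambda$ costs one logarithm. Inserting the fixed content-densities for each admissible $d \mid \Res(G_i,K_i)$, folding in the harmless factor $\tfrac12$ from the identification of primitive pairs with points of $\Pone(\Q)$, and summing the finitely many contributions over $d$ and over $i \in I$ yields
\begin{equation*}
  N(\Omega\abc;h) = \kappa\abc\, h^{2/N} + O\!\paren{h^{1/N}\log h},
\end{equation*}
where $\kappa\abc$ is an explicit finite sum of volumes $\vol(R^{(i)}_1)$ weighted by rational content-densities and $1/\zeta(2)$. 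Using $2/N = \chi$ and $1/N = \chi/2$ and absorbing the logarithm into $h^{\chi/2+\varepsilon}$ gives the stated estimate, and $\kappa\abc > 0$ is automatic since each $R^{(i)}_1$ has positive area.

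I expect the main obstacle to lie in the parametrization step, not the analysis. One must (i) produce the finite family $\brk{\beta_i}$ over $\Z$ with the correct integral ramification structure, so that substituting primitive pairs gives genuinely primitive points with content compatible with the power conditions; (ii) prove \emph{completeness}, that the twists exhaust $\Omega\abc$; and (iii) ensure the covering is \emph{essentially injective}, ruling out double counting both across distinct twists and within the deck group of a single $\beta_i$. Here it is crucial that for $(2,3,3),(2,3,4),(2,3,5)$ the deck group is not realized inside $\PGL_2(\Q)$, so a rational point of the base lifts to at most one rational point of each twist — precisely what makes the count one-to-one. The remaining technical care is to keep the content sieve uniform enough to preserve the square-root error; this is where the $\varepsilon$ is spent.
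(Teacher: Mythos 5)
Your overall strategy --- parametrize $\Omega\abc$ by a finite family of degree-$2/\chi$ geometrically Galois \belyi maps, compare heights through the gcd of the two coordinate forms (which divides the resultant and so takes finitely many values, each on a set of computable density), and count primitive lattice points in a homogeneously dilating Lipschitz region via Davenport's lemma and a M\"obius sieve --- is essentially the paper's route (\Cref{prop:N(Omega;h)-asymptotics} together with the covering/twisting/sieving steps of \Cref{sec:proofs}). The analytic core and the shape of the constant match.

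There is, however, a genuine gap in the counting identity. You assert that every $Q \in \Omega\abc$ away from $\brk{0,1,\infty}$ equals $\beta_i(u)$ for a \emph{unique} pair $(i,u)$, justified by the remark that for $(2,3,3),(2,3,4),(2,3,5)$ the deck group does not embed in $\PGL_2(\Q)$. This is a non sequitur: the relevant group is not $\Gal(\beta_i)$ but its subgroup of rational automorphisms $\Aut(\beta_i) = \Aut(\beta_{i,\Qbar})^{\Gal_\Q}$, which can be nontrivial even when the full deck group is irrational; and for the dihedral signatures $(2,2,c)$ --- which your theorem must also cover --- part of $D_c$ acts by rational fractional linear transformations (e.g.\ $(\s:\t)\mapsto(\t:\s)$ for $\frac{(\s^c+\t^c)^2}{4(\s\t)^c}$), so fibers over rational points of the image generically contain several rational points. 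The correct statement, which is the content of \Cref{cor:counting}, is that the fiber over any unramified $Q$ in the image is a torsor under $\Aut(\beta_i)$, hence has exactly $\#\Aut(\beta_i)$ rational points, and one must divide your fiberwise count by this order. Without that division the leading constant is inflated by $\#\Aut(\beta_i)$ per twist --- in the Pythagorean case $(2,2,2)$ by a factor of $4$, giving $\tfrac{12}{\pi}$ in place of the correct $\tfrac{3}{\pi}$ of \Cref{thm:main-belyi-222}. The exponent and error term are unaffected, but since the theorem asserts an asymptotic equality with an explicit constant, this must be repaired. A secondary, fixable looseness: completeness and pairwise disjointness of the images of the $\beta_i$ are delegated to Beukers rather than argued; the paper derives both from descent for the \belyi stack (distinct twists have disjoint images because they classify non-isomorphic torsors) together with \Cref{lemma:simplified-partitions}, which matches twists to $\emptyset$-simplified Fermat coefficient triples to sieve $\Omega_\mcS\abc$ down to $\Omega\abc$.
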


Consider the generalized Fermat equation
\begin{equation}
  \label{eq:gfe}
  F\colon A\x^a + B\y^b + C\z^c = 0 \subset \A^3_\Z 
\end{equation}
for arbitrary integers $A,B,C$ satisfying $A\cdot B \cdot C \neq 0$. A solution
$(x,y,z) \in \Z^3 - \brk{(0,0,0)}$ is said to be \cdef{primitive} when
$\gcd(x,y,z) = 1$. Corresponding to each $F$, we have the \cdef{punctured cone}
$\mcU$ (obtained by deleting the closed subscheme $\brk{\x = \y = \z = 0}$ from
$F$) and the morphism
\begin{equation}
  \label{eq:j-map}
  j\colon \mcU \to \Pone_\Z, \quad (x,y,z) \mapsto (-Ax^a : Cz^c).
\end{equation}
Note that $\mcU(\Z)$ is identified with the set of primitive integral solutions
to $F$. Define the subset $\Omega(F) \subset \Pone(\Q)$ to be the image of the function
$j(\Z)\colon \mcU(\Z) \to \Pone(\Z) = \Pone(\Q)$.

The set $\Omega(a,b,c)$ and the primitive integral solutions to the equation
are closely related when $A,B,C \in \Z^\times = \brk{\pm 1}$. Indeed, given $Q\in
\Omega(a,b,c)$, then $|\num(Q)| = |x|^a$, $|\num(Q-1)| = |y|^b$ and $|\den(Q)| =
|z|^c$. From the identity
\[-\num(Q) + \num(Q-1) + \den(Q) = 0,\] we deduce that $(x,y,z)$ is a primitive
integral solution to \Cref{eq:gfe} for some choice of
$(A,B,C)\in \brk{\pm 1}^3$. Conversely, given a primitive integral
solution $(x,y,z)$ to the equations
\[\x^a + \y^b + \z^c = 0, \quad \x^a + \y^b - \z^c = 0, \quad \x^a - \y^b +
  \z^c = 0,\quad \x^a - \y^b - \z^c = 0,\] we see that $Q = \pm x^a/z^c$ is in
$\Omega(a,b,c)$.

By carefully identifying how the sets $\Omega(F)$ fit inside of $\Omega\abc$
(or rather, certain supersets $\Omega_\mcS\abc \supset \Omega\abc$) we are able
to obtain the following stronger result.
 
\begin{theorem}
  \label{thm:main-fermat} Consider \Cref{eq:gfe} with $A,B,C \in \Z$ nonzero
  and $a,b,c > 1$. Suppose that $\chi\colonequals \chi\abc >0$, and that there
  exists at least one primitive integral solution to $F$. Then, there exists an
  explicit constant $\kappa(F) >0$ such that for every $\varepsilon > 0$,
  \begin{align*}
    N(\Omega(F); h) = \kappa(F)\cdot h^{\chi} + O(h^{\chi/2 + \varepsilon}),
  \end{align*}
 as $h \to \infty$. The implied constant depends on $\varepsilon$.
\end{theorem}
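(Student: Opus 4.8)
The plan is to relate $\Omega(F)$ to the superset $\Omega_\mcS\abc$ for the finite set $\mcS \colonequals \{p : p \mid ABC\}$, and then transport the asymptotic of \Cref{thm:main-belyi} across this relationship. First I would analyze the map $j$ of \Cref{eq:j-map} arithmetically. For a primitive solution $(x,y,z) \in \mcU(\Z)$, write $Q = j(x,y,z) = (-Ax^a : Cz^c)$; using the defining relation $A\x^a+B\y^b+C\z^c = 0$ one computes $Q - 1 = By^b/(Cz^c)$, so that the three integers $-Ax^a$, $By^b$, $Cz^c$ govern $\num(Q)$, $\num(Q-1)$, $\den(Q)$ respectively, up to the common content and the fixed factors $A,B,C$. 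A prime $p \notin \mcS$ dividing both $x$ and $z$ would divide $By^b$ and hence $y$, contradicting $\gcd(x,y,z)=1$; thus the content of the pair $(-Ax^a, Cz^c)$ is supported on $\mcS$, and away from $\mcS$ the three quantities are genuine $a^\th$, $b^\th$, and $c^\th$ powers. This shows $\Omega(F) \subseteq \Omega_\mcS\abc$.

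Next I would partition $\Omega(F)$ into finitely many classes according to their $\mcS$-adic type, namely the tuple $(v_p(\num(Q)) \bmod a,\ v_p(\num(Q-1)) \bmod b,\ v_p(\den(Q)) \bmod c)_{p \in \mcS}$, together with the sign data recording which of the sign-twisted equations is being solved. There are only finitely many types, and each is cut out of $\Omega_\mcS\abc$ by prescribing the $\mcS$-part of $Q$. The key point is that within a single type the content $g = \gcd(-Ax^a, Cz^c)$ and the factors $A,B,C$ distort the height $\Ht(Q) = \max\{|\num(Q)|, |\den(Q)|\}$ only by bounded multiplicative constants, so a fixed-type class contributes to $N(\Omega(F); h)$ exactly a lattice-point count of the same shape as the one underlying \Cref{thm:main-belyi}, rescaled by these constants.

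The analytic heart is then a refinement of \Cref{thm:main-belyi} counting points of $\Omega_\mcS\abc$ subject to finitely many local conditions at the primes of $\mcS$. I would prove this by re-running the argument behind \Cref{thm:main-belyi}: imposing congruence conditions at the fixed finite set $\mcS$ replaces the global density $\kappa\abc$ by a product of $\kappa\abc$ with local factors at each $p \in \mcS$, while leaving the main exponent $h^\chi$ and the error order $O(h^{\chi/2+\varepsilon})$ untouched, since finitely many conditions at a fixed finite set of primes act as a sieve affecting only the leading constant. Summing the resulting main terms over the finitely many types produces a single constant $\kappa(F)$, and the finitely many error terms combine into one $O(h^{\chi/2+\varepsilon})$. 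Positivity of $\kappa(F)$ follows from the hypothesis that $F$ has at least one primitive solution, which forces at least one type-class to be nonempty, hence infinite by Beukers, hence of strictly positive density.

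The step I expect to be the main obstacle is the bookkeeping at the bad primes $\mcS$: reconciling the primitivity of $(x,y,z)$ with the possibly large common content of $(-Ax^a, Cz^c)$, and verifying that passing from $\Omega_\mcS\abc$ down to $\Omega(F)$—which is precisely where the actual factors $A,B,C$, and not merely their radicals, enter—alters the leading constant in a controlled and explicitly computable way without degrading the error term. Making the local factors at $\mcS$ explicit, and confirming that the sign and content data partition $\Omega(F)$ cleanly into the claimed finitely many types, is the delicate part; once this is in place, the global $h^\chi$ asymptotic and the $O(h^{\chi/2+\varepsilon})$ error are inherited directly from \Cref{thm:main-belyi}.
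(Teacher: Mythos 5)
Your reduction of $\Omega(F)$ to the superset $\Omega_\mcS\abc$ with $\mcS = \brk{p : p \mid ABC}$, and the observation that the content of the pair $(-Ax^a, Cz^c)$ is supported on (and in fact bounded in terms of) $\mcS$, are correct and match the paper's setup. The gap is in what you call the analytic heart. You treat $\Omega_\mcS\abc$ as if it were amenable to a congruence sieve: you assert that imposing finitely many local conditions at the primes of $\mcS$ replaces $\kappa\abc$ by its product with local densities while preserving the error term. But $\Omega_\mcS\abc$ is a thin set --- the paper exhibits it, via descent for a geometrically Galois $\Q$-\belyi cover $\phi$ of signature $\abc$, as a finite, almost disjoint union $\bigsqcup_\tau \phi_\tau(\Pone_\tau(\Q))$ of images of degree-$d$ twists --- and for such a set there is no a priori product formula for the density of a locally defined subclass. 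The mechanism your proposal is missing is that your ``$\mcS$-adic type'' of a point $Q$ is constant on each piece $\phi_\tau(\Pone_\tau(\Q))$ (it is recorded by the coefficient triple $\coeff(\phi_\tau)$ of \Cref{lemma:simplified-partitions}), so prescribing a type selects a sub-collection $T(F)$ of twists; the main term for that class is then the sum of the individual constants $\kappa(\phi_\tau)$ of \Cref{cor:counting} over those $\tau \in T(F)$ with $\Pone_\tau(\Q)\neq\emptyset$, not $\kappa\abc$ times local factors. Some twists in a given class can be pointless conics and contribute nothing, which is exactly the phenomenon a naive local-density sieve cannot detect. Without the parametrization by twists there is no ``argument behind \Cref{thm:main-belyi}'' to re-run with congruence conditions, since that theorem is itself proved by this descent.

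A second, independent gap is the non-simplified case. Your type invariant records valuations modulo $(a,b,c)$ at the primes of $\mcS$, but this cannot distinguish, say, $25\x^2+\y^2-\z^2=0$ from $\x^2+\y^2-\z^2=0$: in both cases $v_5(\num(Q))$ is even. Passing from the simplification $F$ to $F'$ imposes genuine divisibility conditions (here $5 \mid x$), i.e.\ conditions on actual valuations rather than valuations modulo $(a,b,c)$, and these translate into congruence conditions on the parameters $(s,t)$ of each parametrization $\phi_\tau$. The paper handles this with Beukers' device: one restricts each $\phi_\tau$ to the sublattice of $(s,t)\in\Z^2$ on which the coordinate forms $X, Y, Z$ become divisible by $A_0, B_1, C_\infty$, and re-runs the lattice point count there, changing only the leading constant. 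You correctly flag this as the delicate point, but the proposal contains no argument for it, and your type partition as stated does not produce the set $\Omega(F')$ at all.
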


\subsection{$\mcS$-integral points on the \belyi stack}
\label{sec:proof-sketch} Our approach is geometric. We use the method of
\emph{Fermat descent}, developed by \cite{Darmon&Granville95}, \cite{Darmon97},
and \cite{PoonenSchaeferStoll07}, and expanded on in \cite{SAP-fermat-descent}
from the point of view of stacks. It turns out $\Omega\abc$ is precisely the set of
$\Z$-points on the \cdef{\belyi stack of signature $\abc$}, denoted by
$\Pone\abc$ (see \cite[Section 3]{SAP-fermat-descent}). This is the stacky
version of Darmon's $M$-curve $\mathbf{P}^1_{a,b,c}$ \cite[p.~4]{Darmon97}.

\begin{notation}[Set of points on a stack]
  If $\XX$ is a stack and $R$ is a ring, we denote by $\XX(R)$ the
  \emph{groupoid} of $R$-points, and by $\XX\ideal{R}$ the \emph{set} of
  $R$-points, (see \cite[Section 2.1]{SAP-fermat-descent}).
\end{notation}

For the purposes of this work, we need only to understand the set
$\Pone\abc\ideal{R}$ in the case that $R = \Z[\mcS^{-1}]$ for some finite set
of rational primes $\mcS$. In \cite[Lemma 3.3]{SAP-fermat-descent}, we show
that the set $\Pone\abc\ideal{R}$ is in bijective correspondence with the
subset $\Omega_\mcS\abc$ of the rational points on the projective line
$Q\in \Pone(\Q)$ which satisfy the property that the ideals
\begin{equation}
  \label{eq:Omega-S-abc}
  \num(Q)R, \quad \num(Q-1)R, \quad \den(Q)R,
\end{equation}
are $a^\th$, $b^\th$, and $c^\th$ powers respectively. Since $R$ is a principal
ideal domain, $Q$ belongs to $\Omega_\mcS\abc \subset \Pone(\Q)$ if and only if
\[
\num(Q) = -Ax^a, \quad \num(Q-1) = By^b, \quad \den(Q) = Cz^c,
\]
for some $A, B, C \in R^\times$, and $x,y,z\in \Z$ with $\gcd(x,y,z) = 1$. This choice
of coefficients $(A,B,C)\in (R^\times)^{3}$ is only well defined up to coordinate-wise
multiplication by a unit in $R$. In particular, we can arrange for $A,B,C$ to
be in $\Z \cap R^\times = \brk{n \in \Z: p\mid n \text{ implies } p \in \mcS}$. These
considerations lead to the following definition.
\begin{definition}
  \label{def:fermat-coefficients}
  Let $\mcS$ be a finite set of primes. Define the \cdef{$\mcS$-simplified
    Fermat coefficient triple} of a point $Q \in \Omega_\mcS\abc$ to be the unique
  triple $(A,B,C) \in \Z^3$ satisfying the following properties:
  \begin{enumerate}[leftmargin=*, label=(\roman*)]
  \item \label{it:R-units} The integers $A,B,C$ are $\mcS$-units (i.e., $A,B,C
    \in \Z[\mcS^{-1}]^{\times}\cap \Z$.)
  \item \label{it:power-free} $A$ is $a^\th$ power-free, $B$ is $b^\th$ power-free, and $C$ is
    $c^\th$ power-free.
  \item \label{it:normalization} $\gcd(A,B,C) = 1$ and $A > 0$.
  \item \label{it:div} $A \mid \num(Q)$, $B \mid \num(Q-1)$, and $C \mid \den(Q)$.
  \end{enumerate}
  We denote this assignment by $\coeff(Q) = (A,B,C)$. We say that a generalized
  Fermat equation $F\colon \gfe = 0$ is \cdef{$\mcS$-simple} or
  \cdef{$\mcS$-simplified} if the coefficients $(A,B,C)$ satisfy the properties
  \ref{it:R-units}, \ref{it:power-free}, and \ref{it:normalization} above. We say that
  $F$ is \cdef{simple} or \cdef{simplified}, if it is $\mcS$-simple for
  $\mcS \colonequals \brk{p \text{ prime}: p \mid A\cdot B \cdot C}$.
\end{definition}
\begin{example}
  The $\emptyset$-simple Fermat equations have $\pm 1$ coefficients.
\end{example}

\subsection{The Pythagorean case}
\label{sec:x2+y2-z2} To introduce the main ideas in our proofs, we consider the elementary case
of signature $\abc = (2,2,2)$, where the mention of stacks is unnecessary and
could be considered excessive. We remark that Lehmer \cite[p.~38]{Lehmer1900}
and Lambek--Moser \cite{Lambek&Moser55} already counted the number of
Pythagorean triangles with bounded hypotenuse, and the analytic number theory
techniques used in their work and in ours remain essentially the same. In our
notation, their theorem would read as follows.

\begin{theorem}[Lehmer, Lambek--Moser]
  \label{thm:main-fermat-222} Consider the Pythagorean equation $F_3\colon
  \x^2+\y^2-\z^2 = 0$. Then, 
  \begin{align*}
    N(\Omega(F_3); h) \sim \tfrac{1}{\pi}\cdot h^{1/2},
  \end{align*}
 as $h \to \infty$. 
\end{theorem}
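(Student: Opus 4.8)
The plan is to make the set $\Omega(F_3)$ completely explicit, reduce the count to counting primitive Pythagorean triples of bounded hypotenuse, and then carry out that count by a standard lattice-point argument combined with a Möbius sieve.

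First I would unwind the definitions. For $F_3\colon \x^2+\y^2-\z^2$ we have $(A,B,C)=(1,1,-1)$, so the map $j$ of \Cref{eq:j-map} sends $(x,y,z)\mapsto(-x^2:-z^2)=(x^2:z^2)$. Hence $\Omega(F_3)$ is the set of points $Q=(x^2:z^2)$ as $(x,y,z)$ ranges over primitive Pythagorean triples. Since a primitive triple has pairwise coprime entries, $(x^2:z^2)$ is already in lowest terms, and because $z^2=x^2+y^2\geq x^2$ we get $\Ht(Q)=z^2$; thus $\Ht(Q)\leq h$ is equivalent to $z\leq\sqrt h$. Away from the two degenerate points $Q=0$ (from $(0,\pm1,\pm1)$) and $Q=1$ (from $(\pm1,0,\pm1)$), the set $\Omega(F_3)$ is in bijection with ordered primitive triples $(x,y,z)$ having $x,y,z>0$, via $(x,y,z)\mapsto x^2/z^2$, because $Q$ recovers $x=\sqrt{\num(Q)}$, $z=\sqrt{\den(Q)}$ and $y=\sqrt{z^2-x^2}$. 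The two orderings $(x,y,z)$ and $(y,x,z)$ of a single geometric triangle produce the two \emph{distinct} complementary points $Q$ and $1-Q$ (distinct since $x=y$ is impossible). Consequently $N(\Omega(F_3);h)=2\,T(\sqrt h)+O(1)$, where $T(Z)$ is the number of unordered primitive Pythagorean triangles with hypotenuse at most $Z$.

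Next I would estimate $T(Z)$. Using the classical parametrization $(x,y,z)=(m^2-n^2,\,2mn,\,m^2+n^2)$ with $m>n>0$, $\gcd(m,n)=1$ and $m\not\equiv n\pmod 2$, the quantity $T(Z)$ equals the number of lattice points $(m,n)$ in the sector $\{m>n>0\}$ of the disk $m^2+n^2\leq Z$ subject to the coprimality and opposite-parity constraints. I would first drop coprimality: letting $S(Z)$ count opposite-parity pairs $(m,n)$ with $m>n>0$ and $m^2+n^2\leq Z$, a boundary/area estimate for this sector (area $\tfrac{\pi Z}{8}$), together with the density $\tfrac12$ of opposite-parity points, gives $S(Z)=\tfrac{\pi Z}{16}+O(\sqrt Z)$. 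Reintroducing coprimality by Möbius inversion over odd $g$ (only odd $g$ preserve opposite parity under scaling by $g$) yields $T(Z)=\sum_{g\text{ odd}}\mu(g)\,S(Z/g^2)$, whence
\[
  T(Z)=\tfrac{\pi Z}{16}\sum_{g\text{ odd}}\tfrac{\mu(g)}{g^2}+O(\sqrt Z\log Z)=\tfrac{\pi Z}{16}\cdot\tfrac{8}{\pi^2}+O(\sqrt Z\log Z)=\tfrac{Z}{2\pi}+O(\sqrt Z\log Z),
\]
using $\sum_{g\text{ odd}}\mu(g)g^{-2}=\prod_{p\neq 2}(1-p^{-2})=(1-\tfrac14)^{-1}\zeta(2)^{-1}=\tfrac{8}{\pi^2}$. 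Combining with the previous step and setting $Z=\sqrt h$ gives $N(\Omega(F_3);h)=2\cdot\tfrac{\sqrt h}{2\pi}+O(h^{1/4}\log h)=\tfrac1\pi h^{1/2}+o(h^{1/2})$, which is the claim.

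The computation is classical and presents no serious obstacle; the only points demanding care are the bookkeeping ones, namely that the factor $2$ from the two legs, the sector fraction $\tfrac18$, the opposite-parity density $\tfrac12$, and the sieve constant $\tfrac{8}{\pi^2}$ conspire to the clean constant $\tfrac1\pi$, and that the error terms from both the lattice-point estimate and the truncation of the Möbius sum are genuinely $o(h^{1/2})$. It is worth flagging that the parametrization by the binary forms $m^2-n^2,\,2mn,\,m^2+n^2$ is exactly the Fermat descent for signature $(2,2,2)$ --- equivalently the factorization $z^2=(z-x)(z+x)$, or the Gaussian-integer identity $z=\lvert m+ni\rvert^2$ --- so that the counting problem becomes one of lattice points in a region cut out by congruence conditions. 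This is precisely the model that the stacky method is designed to generalize to arbitrary spherical signatures, where the disk is replaced by a more general region and $z=m^2+n^2$ by a suitable norm form.
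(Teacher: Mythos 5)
Your argument is correct, and the constant comes out right: the factor $2$ from the two legs, the sector area $\tfrac{\pi}{8}Z$, the parity density $\tfrac12$, and the odd-modulus Euler factor $\tfrac{8}{\pi^2}$ do combine to $\tfrac1\pi$, and your error term $O(h^{1/4}\log h)$ is comfortably $o(h^{1/2})$. However, this is a genuinely different route from the paper's. You give the classical, elementary proof --- essentially Lambek--Moser's original argument: identify $\Omega(F_3)$ minus $\brk{0,1}$ with ordered positive primitive triples of hypotenuse at most $\sqrt h$, invoke Euclid's parametrization $(m^2-n^2,2mn,m^2+n^2)$, and count lattice points in an eighth of a disk subject to parity and coprimality via a M\"obius sieve. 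The paper instead runs the three-step Fermat descent it is advertising: it covers with the Galois \belyi map $(x:y:z)\mapsto(-x^2:z^2)$ from the pointless conic $\x^2+\y^2+\z^2=0$, twists over $\HH^1_{\brk{2}}(\Q,C_2\times C_2)$ (the five quadratic-type fields unramified outside $2$), sieves out the class corresponding to the coefficient triple $(1,1,-1)$, and then reads off the constant from the general counting result \Cref{cor:counting} applied in \Cref{example:pythagorean-constant}, where $\kappa(\phi)=\tfrac{3}{\pi^2}\cdot\pi\cdot\paren{\tfrac23+\tfrac{4^{1/2}}{3}}/4=\tfrac1\pi$. The two computations are secretly the same --- your sector fraction, parity density, and odd M\"obius sum are repackaged in the paper as the volume $\vol(\mcR_\phi)=\pi$, the primitivity defect densities $\delta_1=\tfrac23$, $\delta_4=\tfrac13$, and the division by $\#\Aut(\phi)=4$ --- and you correctly flag this at the end. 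What your approach buys is self-containedness and transparency for this one signature; what the paper's buys is uniformity: the covering/twisting/sieving template and the leading-constant formula apply verbatim to every spherical signature, where no Euclid-style parametrization is available off the shelf.
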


We observe that \Cref{thm:main-fermat-222} implies the following consequence of
\Cref{thm:main-belyi}.
\begin{theorem}
  \label{thm:main-belyi-222} The asymptotic
  \begin{align*}
    N(\Omega(2,2,2); h) \sim \tfrac{3}{\pi}\cdot h^{1/2} 
  \end{align*}
  holds, as $h \to \infty$.
\end{theorem}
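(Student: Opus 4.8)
The plan is to partition $\Omega(2,2,2)$ by the sign of the coordinates of $Q$ and to recognize each piece as a copy of $\Omega(F_3)$. Writing $Q = s/t$ in lowest terms, membership in $\Omega(2,2,2)$ says exactly that $|s| = x^2$, $|t| = z^2$, and $|s-t| = y^2$ for nonnegative integers $x,y,z$; the identity $-\num(Q) + \num(Q-1) + \den(Q) = 0$ then forces $(x,y,z)$ to be a primitive solution of one of the three nondegenerate Pythagorean equations $\x^2+\y^2-\z^2=0$, $\x^2-\y^2+\z^2=0$, $\x^2-\y^2-\z^2=0$ (the all-positive pattern $\x^2+\y^2+\z^2=0$ has no nonzero solution). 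Concretely, $Q\in(0,1)$ makes $z$ the hypotenuse, $Q>1$ makes $x$ the hypotenuse, and $Q<0$ makes $y$ the hypotenuse. Away from the three points $\brk{0,1,\infty}$, which contribute $O(1)$, this realizes $\Omega(2,2,2)$ as a disjoint union of three sets, so that $N(\Omega(2,2,2);h) = N_{(0,1)}(h) + N_{(1,\infty)}(h) + N_{(-\infty,0)}(h) + O(1)$.

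For the first piece I would simply invoke \Cref{thm:main-fermat-222}: the interval $(0,1)$ is precisely $\Omega(F_3)$, so $N_{(0,1)}(h) \sim \tfrac1\pi h^{1/2}$. For the piece with $Q>1$ I would use the involution $\iota\colon Q\mapsto 1/Q$. It preserves $\Omega(2,2,2)$ (it interchanges the branch points $0,\infty$ and fixes $1$), it carries $(0,1)$ bijectively onto $(1,\infty)$, and crucially it preserves height, since $\Ht(1/Q)=\max\brk{|\den(Q)|,|\num(Q)|}=\Ht(Q)$. Hence $N_{(1,\infty)}(h) = N_{(0,1)}(h) \sim \tfrac1\pi h^{1/2}$, and two of the three contributions are in hand.

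The hard part will be the piece with $Q<0$. The relabeling that identifies this equation with the Pythagorean one is a permutation of $\brk{0,1,\infty}$ that moves the marked point $1$, and the height $\Ht(Q)=\max\brk{|\num(Q)|,|\den(Q)|}$ is \emph{not} invariant under such a permutation: it singles out $\num(Q-1)$, which for $Q<0$ is the hypotenuse $y$ rather than a leg. Thus on this piece $\Ht(Q)$ is governed by the larger \emph{leg}, and the region is not related to $\Omega(F_3)$ by any height-preserving Möbius map, so one cannot transport the asymptotic as in the previous paragraph. The clean conceptual fix is to work with the $S_3$-symmetric height attached to the stack $\Pone(2,2,2)$, under which all three branch points play equal roles and the three pieces are manifestly interchangeable; with the naive height one must instead evaluate $N_{(-\infty,0)}(h)$ directly. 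I would parametrize primitive triples by $(m,n)$ with $m>n>0$ coprime of opposite parity and pass to $\zeta = m+in = \rho e^{i\theta}$, so that the legs are $\Re(\zeta^2),\Im(\zeta^2)$ and the hypotenuse is $\rho^2$; the relevant counting region becomes $\brk{0<\theta<\tfrac\pi4,\ \rho^2\max(\cos 2\theta,\sin 2\theta)\le h^{1/2}}$, to be compared (via the uniform density $4/\pi^2$ of admissible lattice points) against the hypotenuse region $\brk{0<\theta<\tfrac\pi4,\ \rho^2\le h^{1/2}}$ that controls the first two pieces. Confirming that this third count carries the same leading constant is precisely the delicate step, and is where I expect the real work — and the main risk of a different constant — to lie; once it is settled, summing the three contributions yields $N(\Omega(2,2,2);h)\sim \tfrac3\pi h^{1/2}$.
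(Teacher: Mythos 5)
Your decomposition of $\Omega(2,2,2)$ into the three sign regions is exactly the paper's decomposition $\Omega(F_1)\cup\Omega(F_2)\cup\Omega(F_3)$ from \Cref{table:twists-pythagorean-equation}, and your treatment of the first two pieces is complete and correct; the height-preserving involution $Q\mapsto 1/Q$ is a clean way to see that $N(\Omega(F_1);h)=N(\Omega(F_3);h)$ exactly. The gap is the one you name yourself: the piece $Q<0$, i.e.\ $\Omega(F_2)$ for $F_2\colon \x^2-\y^2+\z^2=0$, is never evaluated. This is not a routine verification, and your suspicion about the constant is well founded. Carrying out precisely the computation you set up: the hypotenuse region has angular integral $\int_0^{\pi/4}\dd\theta=\pi/4$, whereas the larger-leg region has
\begin{equation*}
\int_0^{\pi/4}\frac{\dd\theta}{\max(\cos 2\theta,\sin 2\theta)}\;=\;2\int_0^{\pi/8}\frac{\dd\theta}{\cos 2\theta}\;=\;\ln(1+\sqrt{2})\;\neq\;\tfrac{\pi}{4},
\end{equation*}
so the third piece contributes $\tfrac{4\ln(1+\sqrt{2})}{\pi^2}\cdot h^{1/2}\approx 0.357\,h^{1/2}$ rather than $\tfrac{1}{\pi}h^{1/2}\approx 0.318\,h^{1/2}$. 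This agrees with the paper's own \Cref{cor:counting} applied to $\phi_2(s:t)=\bigl(-(s^2-t^2)^2:(2st)^2\bigr)$, for which $\vol(\mcR_{\phi_2})=4\ln(1+\sqrt{2})$, the defect data are again $\delta_1=2/3$, $\delta_4=1/3$, and $\#\Aut(\phi_2)=4$, giving $\kappa(\phi_2)=\tfrac{4\ln(1+\sqrt{2})}{\pi^2}$. (A hand check: there are $18$ primitive right triangles with larger leg at most $100$, but only $16$ with hypotenuse at most $100$.)

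Consequently the argument cannot close with the stated constant: summing the three pieces gives $N(\Omega(2,2,2);h)\sim\bigl(\tfrac{2}{\pi}+\tfrac{4\ln(1+\sqrt{2})}{\pi^2}\bigr)h^{1/2}\approx 0.994\,h^{1/2}$, not $\tfrac{3}{\pi}h^{1/2}\approx 0.955\,h^{1/2}$. The discrepancy traces to the paper's proof of \Cref{thm:main-fermat-222}, where $N(\Omega(F_1);h)=N(\Omega(F_2);h)=N(\Omega(F_3);h)$ is asserted because the three conics are twists of one another; but twists are only $\Qbar$-isomorphic, and the height $\Ht(Q)=\max\brk{|\num(Q)|,|\den(Q)|}$ is not invariant under the coordinate change interchanging $1$ and $\infty$ --- exactly the obstruction you identify. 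Your proposed remedy is also the right one: with the $S_3$-symmetric height $\max\brk{|\num(Q)|,|\num(Q-1)|,|\den(Q)|}$ every piece is governed by the hypotenuse and each contributes $\tfrac{1}{\pi}h^{1/2}$, recovering $\tfrac{3}{\pi}$. As stated, with the two-coordinate height, the theorem's constant appears to require correction, so the ``gap'' in your write-up is really a correct diagnosis of an error rather than a missing step.
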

\begin{proof}
  Consider the group $G \colonequals \brk{\pm 1}^3/\langle\pm 1\rangle$, and list its elements
\begin{align*}
  \label{eq:H}
  e_0 = [1,1,1], \quad  e_1 = [1,-1,-1], \quad  e_2 = [1,-1,1], \quad  e_3 = [1,1,-1].
\end{align*}
Consider the Fermat conics $F_0, F_1, F_2, F_3$ with $\x^2,\y^2,\z^2$
coefficients given by the element in $G$ with matching index. For each element in
$G$, we attach a corresponding map $j\colon \mcU \to \Pone$ as in \Cref{eq:j-map}.

\begin{table}[H]
\caption{$G$-twists of Pythagorean equation.}
\label{table:twists-pythagorean-equation}
\setlength{\arrayrulewidth}{0.2mm} \setlength{\tabcolsep}{5pt}
\renewcommand{\arraystretch}{2}
  \begin{tabular}{|c|c|l|}
    \hline
    \rowcolor{headercolor}
    $G$   & $F$            & \multicolumn{1}{c|}{$j$}  \\ \hline
    $e_0$ & $\x^2+\y^2+\z^2 = 0$ & $(x,y,z) \mapsto (-x^2:z^2)$ \\ \hline
    $e_1$ & $\x^2-\y^2-\z^2 = 0$ & $(x,y,z) \mapsto (x^2:z^2)$  \\ \hline
    $e_2$ & $\x^2-\y^2+\z^2 = 0$ & $(x,y,z) \mapsto (-x^2:z^2)$ \\ \hline
    $e_3$ & $\x^2+\y^2-\z^2 = 0$ & $(x,y,z) \mapsto (x^2:z^2)$  \\ \hline
\end{tabular}
\end{table}
The set $\Omega(2,2,2)$ is the pushout
  \begin{equation*}
    \dfrac{\Omega(F_1)\sqcup \Omega(F_2) \sqcup \Omega(F_3)}{\brk{0,1,\infty}}.
  \end{equation*}
  In other words,
  $\Omega(2,2,2) = \Omega(F_1)\cup \Omega(F_2) \cup \Omega(F_3)$ and the pairwise intersections
  $\Omega(F_i)\cap \Omega(F_j)$ for $i,j \in \brk{1,2,3}$ are contained in
  $\brk{0,1,\infty}$. This can be checked by partitioning the set
  $\Omega(2,2,2)$ according to the signs of $\num(Q), \num(Q-1),$ and
  $\den(Q)$, and staring at \Cref{table:twists-pythagorean-equation}. From this
  description, we deduce from \Cref{thm:main-fermat-222} that
  \[N(\Omega(2,2,2);h) = N(\Omega(F_1);h) + N(\Omega(F_2);h) + N(\Omega(F_3);h)
    \sim \tfrac{3}{\pi}\cdot h^{1/2}.\]
\end{proof}

Now, we will prove \Cref{thm:main-fermat-222} using the method of \emph{Fermat
  descent}.

\begin{proof}[Proof of \Cref{thm:main-fermat-222}] The proof proceeds in three
  steps: covering, twisting, and sieving. \\
  
  \noindent \emph{Step 1: (Covering)} A suitable covering is readily available. Indeed, if
  $Z_0$ denotes the plane conic defined by $F_0$, the $j$-map
  $j_0\colon \mcU_0 \to \Pone$ induces the morphism
\[
  \phi_0 \colon Z_0 \to \Pone_\Q, \quad (x:y:z) \mapsto (-x^2:z^2).
\]
One verifies that $\phi$ is a Galois \belyi map defined over $\Q$ with Galois
group $G$, diagonally embedded in $\PGL_3(\Q)$. Since $\mcU_0(\Z)$ is empty, so
is $\Omega(F_0)$.

Any other cover $\phi_i\colon Z_i \to \Pone$ (induced from
$j_i\colon \mcU_i \to \Pone$) would suffice, but we choose the pointless conic
for dramatic emphasis.

  \medskip
  \noindent\emph{Step 2: (Twisting)} Consider the Galois cohomology group $\HH^1(\Q, G)$.
  Since the absolute Galois group $\Gal_\Q \colonequals \Gal(\Qbar | \Q)$ acts
  trivially on the abelian group $G$, $\HH^1(\Q,
  G)$ 
  is the group of continuous group homomorphisms $\Gal_\Q \to G$. Every such
  map factors through a unique injective morphism
  $\Gal(L|\Q) \hookrightarrow G$, where $L \supset \Q$ is a finite Galois
  extension.

  The only bad prime for the covering (in the sense of \cite[Lemma
  3.23]{SAP-fermat-descent}) $\phi$ is $p=2$. In the notation of
  \Cref{sec:proof-sketch}, $\mcS = \brk{2}$, and $R = \Z[1/2]$. By descent
  theory, we are only interested in the subset
  $\HH^1_\mcS(\Q,G) \subset \HH^1(\Q, G)$ corresponding to those injections
  $\Gal(L|\Q) \hookrightarrow G$ for which $L$ is unramified outside $\{2\}$.
  The possible fields are
  $$L \in \brk{\Q, \Q(\sqrt{-1}), \Q(\sqrt{2}), \Q(\sqrt{-2}), \Q(\zeta_8)}.$$

  Descent theory tells us that the set
  $\Omega_\mcS(2,2,2) \colonequals \Pone(2,2,2)\ideal{R} \cong
  [\Pone_R/\Autsch(\Phi)]\ideal{R}$ is partitioned by the disjoint union of the
  sets $\phi_\rho(Z_\rho(\Q))$, as $\rho$ ranges over $\HH^1_\mcS(\Q, G)$.
  \begin{equation}
    \label{eq:descent-partition-222}
    \Omega_\mcS(2,2,2) = \bigsqcup_{\rho\in \HH_\mcS^1(\Q,G)} \phi_\rho(Z_\rho(\Q)).
  \end{equation}
  It is well known that for a finite morphism
  $\phi\colon \Pone_\Q \to \Pone_\Q$, one has that
  $N(\phi(\Pone(\Q)); h) \asymp h^{2/\deg \phi}$. Moreover, in the special case
  that $\phi$ is geometrically Galois,
  $N(\phi(\Pone(\Q)); h) \sim \kappa(\phi)\cdot h^{2/\deg \phi}$ for some
  explicitly computable constant $\kappa(\phi)>0$. We give a detailed proof of
  these results in \Cref{sec:count-rat-points} for completeness. Combining this
  with the partition \Cref{eq:descent-partition-222} implies that
  \begin{equation*}
    N(\Omega_\mcS(2,2,2);h) = \sum_{\rho} N(\phi_\rho(Z_\rho(\Q)); h) \sim
    \kappa((2,2,2), \mcS)\cdot h^{1/2},
  \end{equation*}
  where the sum is restricted to those $\rho\colon \Gal(L|\Q) \hookrightarrow
  G$ in $\HH_\mcS(\Q, G)$ for which the twist $Z_\rho$ is isomorphic to
  $\Pone_\Q$. In particular, the constant $\kappa((2,2,2),\mcS)$ will be the
  sum of the constants $\kappa(\phi_\rho)$.

  \medskip
  \noindent\emph{Step 3: (Sieving)} The count above already contains
  the count of the proper subset $\Omega(F_3) \subset \Omega_\mcS(2,2,2)$ that
  we seek. Indeed, starting from the
  partition~(\ref{eq:descent-partition-222}), we note that, since the twists
  $\phi_\rho$ are (Galois) \belyi maps of signature $(2,2,2)$, we can assign to
  each $\rho \in \HH^1_\mcS(\Q, G)$ a unique $2$-simplified coefficient
  $(A_\rho, B_\rho, C_\rho)$ such that $\phi_\rho(Z_\rho(\Q))$ is contained in
  the set $\Omega(F_\rho)$, associated to the generalized Fermat equation
  \[F_\rho\colon A_\rho\x^2+B_\rho\y^2+C_\rho\z^2 = 0.\] In particular, we
  deduce that some twist of $\phi_0\colon Z_0 \to \Pone$ is isomorphic to
  $\phi_3\colon Z_3 \to \Pone$, and that
  $\Omega(F_3) = \Omega(\phi_3(Z_3(\Q)))$. In
  \Cref{example:pythagorean-constant}, we calculate that $\kappa(F_3) = 1/\pi$,
  and we conclude that
  \begin{equation*}
    N(\Omega(F_1);h) = N(\Omega(F_2);h) = N(\Omega(F_3);h) \sim \tfrac1\pi\cdot h^{1/2}.
  \end{equation*}
\end{proof}

\subsection{Previous work on spherical Fermat equations}
\label{sec:previous-work}
This work is closely related to, and inspired by, the foundational
contributions of Beukers~\cite{Beukers98}. Indeed, the arguments in
\Cref{sec:proofs} can be slightly modified to
reprove~\cite[Theorem~1.2]{Beukers98}. On a related note, the excellent
Master's thesis of Esmonde~\cite{Esmonde99} addresses the problem of solving
the equation $\x^a + \y^b - \z^c = 0$ in polynomial rings $k[t]$, for certain
examples of fields~$k$. Building on work of Beukers, Edwards~\cite{Edwards04}
completed the parametrizations of the spherical equations $\x^2+\y^3-\z^3=0$,
$\x^2+\y^3-\z^4 = 0$, and $\x^2+\y^3-\z^5 = 0$. We expect that the method of
Fermat descent employed here can be extended to compute parametrizations for
general spherical Fermat equations; this is work in progress by the author.

\subsection*{Acknowledgments}
\label{sec:acknowledgements}
This work is part of the author’s PhD thesis. We thank David Zureick-Brown,
John Voight, and Andrew Kobin for many enlightening conversations on this topic
and for their valuable feedback. We are also grateful to Bjorn Poonen for
agreeing to serve on the thesis committee and for his detailed and insightful
comments on an earlier draft.

\section{Belyi maps and triangle groups}
\label{sec:preliminaries}

\subsection{(Spherical) triangle groups}
\label{sec:spherical-groups}
We follow \cite[Section 2]{Clark&Voight19}. For more on this topic see
\cite[Chapter II]{Magnus74}.

Let $a,b,c > 1$ be positive integers. We say that the triple $\abc$ is
\cdef{spherical}, \cdef{Euclidean}, or \cdef{hyperbolic} according as the
quantity
\begin{align*}
  \chi\abc \colonequals \tfrac1a + \tfrac1b + \tfrac1c - 1
\end{align*}
is positive, zero, or negative.

\begin{definition}
  \label{def:triangle-group} Given integers $a,b,c > 1$, the
  \cdef{extended triangle group} $\tri\abc$ is defined as the group generated
  by elements $\delta_0, \delta_1, \delta_\infty, -1$, with $-1$ central in
  $\tri\abc$, subject to the relations $(-1)^2 = 1$ and
    \begin{equation}
        \label{eq:triangle-relations}
        \delta_0^a = \delta_1^b = \delta_\infty^c = \delta_0\delta_1\delta_\infty = -1.
      \end{equation}
   Define the \cdef{triangle group} $\tribar\abc$ as the quotient of $\tri\abc$
   by $\brk{\pm 1}$.
\end{definition}

The spherical triangle groups are all finite groups. Moreover, they are all
finite subgroups of $\PGL_2(\Qbar)$. These were classified by Klein more than a
century ago. By \cite[Remark 2.2]{Clark&Voight19}, reordering the signature
$\abc$ to be nondecreasing $a \leq b \leq c$ does not affect the isomorphism
class of $\tribar\abc$.
\begin{itemize}[leftmargin=*]
\item For the \cdef{dihedral signatures} $\abc = (2,2,c)$ with $c\geq 2$, the
  triangle groups $\tribar(2,2,c)$ are isomorphic to the dihedral group $D_{c}$
  with $2c$ elements. In particular, $\tribar(2,2,3)$ is isomorphic to the
  symmetric group in three letters $S_3$. The group $\tribar(2,2,2)$ is
  isomorphic to the Klein four group $C_2 \times C_2$.
\item For the \cdef{tetrahedral signature} $\abc = (2,3,3)$, the triangle group
  $\tribar(2,3,3)$ is isomorphic to $A_4$; the group of rotational symmetries of the
  tetrahedron.
\item For the \cdef{octahedral signature} $\abc = (2,3,4)$, the triangle group
  $\tribar(2,3,4)$ is isomorphic to $S_4$; the group of rotational symmetries of the
  octahedron.
\item For the \cdef{icosahedral signature} $\abc = (2,3,5)$, the triangle group
  $\tribar(2,3,5)$ is isomorphic to $A_5$; the group of rotational symmetries of the
  icosahedron.
\end{itemize}

\begin{table}[ht]
\centering
\caption{Spherical triangle groups.}
\label{table:spherical-groups}
\setlength{\arrayrulewidth}{0.2mm} 
\setlength{\tabcolsep}{5pt}
\renewcommand{\arraystretch}{2}
\begin{tabular}{|c|c|c|}
  \hline
  \rowcolor{headercolor}
  $\abc$ & $\tribar\abc$ & $\chi\abc$ \\ \hline
  $(2,2,c)$ & $D_c$ & $1/c$ \\ \hline
  $(2,3,3)$ & $A_4$ & $1/6$ \\ \hline
  $(2,3,4)$ & $S_4$ & $1/12$ \\ \hline
  $(2,3,5)$ & $A_5$ & $1/30$ \\ \hline
\end{tabular}
\end{table}

\subsection{(Spherical) Belyi maps}
\label{sec:spherical-belyi-maps} By \cdef{curve} we mean a separated scheme of
finite type over a field of dimension one. We say that a curve is \cdef{nice}
if it is smooth, projective, and geometrically irreducible.

\begin{definition}
	\label{def:belyi-map}
	Let $Z_k$ be a nice curve defined over a perfect field $k$. A
    \cdef{$k$-\belyi map} is a finite $k$-morphism
    $\phi\colon Z_k \to \Pone_k$ that is unramified outside
    $\brk{0, 1, \infty} \subset \Pone(k)$.
  \end{definition}

  \begin{remark}
    These remarkable covers of the projective line are named after the
    Ukrainian mathematician G. V. \belyi, who famously proved that a complex
    algebraic curve can be defined over a number field if and only if it admits
    a $\C$-\belyi map \cite{Belyi79, Belyi02}. For this reason, it is customary
    to require that $k \subset \C$ to use the term $\belyi$ map. We ignore this
    convention, and allow $k$ to be perfect of positive characteristic.
  \end{remark}

\begin{definition}
    \label{def:galois-belyi-map}
    Let $\phi\colon Z_k \to \Pone_k$ be a $k$-\belyi map with automorphism
    $k$-group scheme $\Aut(\phi)$. We say that $\phi$ is \cdef{geometrically
      Galois} if the extension of function fields
    $\kk(Z_\kbar) \supset \kk(\Pone_\kbar)$ is Galois, with \cdef{Galois group}
    denoted by $\Gal(\phi)$. Equivalently, $\phi$ is geometrically Galois if
    $\Autsch(\phi)(\kbar) = \Aut(\phi_\kbar)$ acts transitively on the fibers. This is the case if
    and only if $\Aut(\phi_\kbar) \cong \Gal(\phi)$.
  \end{definition}
  \begin{remark}
    If $\phi\colon Z_k \to \Pone_k$ is a geometrically Galois $k$-\belyi map,
    for any $Q \in \Pone(k)-\brk{0,1,\infty}$, the fiber
    $\phi^{-1}(Q) \colonequals Z \times_k Q$ is a $\Gal(\phi)$-torsor over
    $\Spec k$.
  \end{remark}

  \begin{definition}
    The \cdef{signature} of a geometrically Galois $k$-\belyi map
    $\phi\colon Z_k \to \Pone_k$ is the triple $(e_0, e_1, e_\infty)$ where
    $e_P$ is the ramification index $e_\phi(z)$ of any critical point
    $z \in Z_k$ with critical value $P \in \brk{0,1,\infty}$. The \cdef{Euler
      characteristic} of $\phi$ is the quantity
    \begin{equation}
    \label{eq:euler-char}
    \chi(\phi) \colonequals \tfrac{1}{e_0} + \tfrac{1}{e_1} + \tfrac{1}{e_\infty} - 1.    
\end{equation} 
\end{definition}

As a consequence of the Riemann Existence Theorem, there exist Galois \belyi
maps of any spherical signature. See \cite[Proposition
3.1]{Darmon&Granville95} and \cite[Lemma 2.5]{Poonen05} for a proof of the
following proposition.

\begin{proposition}
    \label{prop:belyi-Galois-cover}
    For any positive integers $a,b,c > 1$, there exists a number field $K$ and
    a geometrically Galois $K$-\belyi map $\phi\colon Z_K \to \Pone_K$ of
    signature $(e_0,e_1,e_\infty) = \abc$. Let $g$ be the genus of $Z_K$, and $G$
    be the Galois group of $\phi$. Then
    $2-2g = (\deg\phi)\cdot \chi(\phi)$. In particular,
    \begin{enumerate}[label=(\roman*)]
    \item If $\chi(\phi) > 0$, then $g = 0$ and
      $\deg \phi = \# G = 2/\chi(\phi)$.
    \item If $\chi(\phi) = 0$, then $g = 1$.
    \item If $\chi(\phi) < 0$, then $g > 1$.
    \end{enumerate}
  \end{proposition}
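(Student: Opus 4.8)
The plan is to separate the existence claim from its numerical consequences: I would construct the cover analytically over $\C$, descend it to a number field, and then read off the genus formula and the three cases from Riemann--Hurwitz. For the construction, recall that the topological fundamental group of $\Pone(\C) - \brk{0,1,\infty}$ is freely generated by small loops $\gamma_0, \gamma_1, \gamma_\infty$ about the three punctures, subject only to $\gamma_0\gamma_1\gamma_\infty = 1$. A connected finite Galois cover with deck group $G$ corresponds to a surjection from this fundamental group onto $G$, and the ramification index above $0$ (resp. $1$, $\infty$) equals the order of the image of $\gamma_0$ (resp. $\gamma_1$, $\gamma_\infty$). Taking $G = \tribar\abc$ together with the tautological surjection sending $\gamma_0, \gamma_1, \gamma_\infty$ to $\bar\delta_0, \bar\delta_1, \bar\delta_\infty$ --- whose orders are exactly $a,b,c$ by \Cref{def:triangle-group} --- the Riemann Existence Theorem produces a geometrically Galois \belyi map $Z_\C \to \Pone_\C$ of signature $\abc$ with $\Gal(\phi) \cong \tribar\abc$. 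Since the signature is spherical, this group is finite (by the classification recorded in \Cref{table:spherical-groups}), so the cover is a genuine finite map of compact Riemann surfaces.

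Next I would descend the cover to a number field. The monodromy data is rigid in the sense that, up to isomorphism, there are only finitely many covers with the prescribed branch locus and deck group, so $\Gal_\Q$ acts on this finite set; consequently each such cover is defined over $\Qbar$, and hence over a number field $K$. This is precisely the content of \cite[Proposition 3.1]{Darmon&Granville95} and \cite[Lemma 2.5]{Poonen05}, which I would cite rather than reprove. In the spherical regime one could alternatively bypass abstract descent entirely, since $Z$ will turn out to have genus $0$ and the relevant covers can be written down explicitly as Klein's rational maps with algebraic coefficients.

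It then remains to extract the numerical formula. Genus is a geometric invariant, so I compute over $\kbar \subseteq \C$. Writing $n = \deg\phi = \#G$, the geometric Galois property forces the fiber over each $P \in \brk{0,1,\infty}$ to consist of $n/e_P$ points, each with ramification index $e_P$, so Riemann--Hurwitz yields
\begin{align*}
  2g - 2 &= -2n + \sum_{P \in \brk{0,1,\infty}} \frac{n}{e_P}\paren{e_P - 1} \\
  &= n\paren{-2 + \sum_{P} \paren{1 - \tfrac{1}{e_P}}} = -n\cdot\chi(\phi),
\end{align*}
that is, $2 - 2g = (\deg\phi)\cdot\chi(\phi)$. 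The three cases follow by sign analysis, using that $g$ is a non-negative integer: if $\chi(\phi) > 0$ then $2 - 2g > 0$ forces $g = 0$, whence $\deg\phi = \#G = 2/\chi(\phi)$; if $\chi(\phi) = 0$ then $2 - 2g = 0$ gives $g = 1$; and if $\chi(\phi) < 0$ then $2 - 2g < 0$ gives $g > 1$.

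The main obstacle is the descent step. Existence over $\C$ is immediate from the Riemann Existence Theorem, but producing a model over a number field genuinely requires the rigidity/finiteness input (equivalently, the nontrivial direction of \belyis theorem). Everything else is a bookkeeping application of Riemann--Hurwitz, and the spherical hypothesis makes the descent its mildest version, since the genus-$0$ conclusion permits an explicit treatment.
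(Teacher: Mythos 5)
The paper offers no proof of its own here --- it defers entirely to \cite[Proposition 3.1]{Darmon&Granville95} and \cite[Lemma 2.5]{Poonen05} --- and your outline reconstructs essentially the argument those references give: the Riemann Existence Theorem applied to a surjection of $\pi_1(\Pone(\C)-\brk{0,1,\infty})$ onto a finite group with generators of prescribed orders, descent to a number field via rigidity, and then Riemann--Hurwitz. Your Riemann--Hurwitz computation and the deduction of cases (i)--(iii) from the sign of $\chi(\phi)$ are correct.

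There is, however, a genuine gap in the existence step. The proposition is stated for \emph{any} $a,b,c>1$, and cases (ii) and (iii) concern the Euclidean and hyperbolic signatures, for which $\tribar\abc$ is an \emph{infinite} group; your tautological surjection onto $\tribar\abc$ then yields an infinite cover, not a \belyi map. The real content of the cited lemmas is the construction of a suitable \emph{finite} group $G$ admitting generators of exact orders $a,b,c$ with trivial product --- for instance a finite quotient of $\tri\abc$ through which the generators retain their orders, obtained from congruence quotients in $\PGL_2$ over finite fields. You invoke finiteness only ``since the signature is spherical,'' so your construction establishes existence only in case (i). A second, smaller point: the relations in \Cref{def:triangle-group} show only that the orders of $\bar\delta_0,\bar\delta_1,\bar\delta_\infty$ \emph{divide} $a,b,c$; that they equal $a,b,c$ (so that the resulting cover has signature exactly $\abc$ rather than a triple of proper divisors) needs justification, e.g.\ the realization of $\tribar\abc$ as a rotation group. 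For the spherical case --- the only one the surrounding text actually uses --- your argument is otherwise sound and agrees with the standard proof in the references.
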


  A crucial fact that we will need later is that for every spherical signature
  $\abc$, there exists a geometrically Galois \belyi map defined over $\Q$ with
  signature $\abc$. The reader may find several examples in the \belyi maps
  \href{https://beta.lmfdb.org/Belyi/}{LMFDB beta database} \cite{lmfdb:beta}.
  The maps presented in \Cref{table:spherical-groups} are adapted from the
  parametrizations found in \cite[Chapter 14]{Cohen07}. The original sources
  are \cite{Beukers98} and \cite{Edwards04}.
  \begin{table}[ht]
  \centering
  \caption{Examples of geometrically Galois $\Q$-\belyi maps for the spherical
    signatures.}
  \label{table:belyi-maps}
  \setlength{\arrayrulewidth}{0.2mm} \setlength{\tabcolsep}{5pt}
  \renewcommand{\arraystretch}{2}
  \begin{tabular}{|c|c|c|}
    \hline
    \rowcolor{headercolor}
    $\abc$ & $\tribar\abc$ & Example  \\ \hline
    $(2,2,c)$ & $D_c$ &  $\frac{(\s^c+\t^c)^2}{4(\s\t)^c}$ \\ \hline
    $(2,3,3)$ & $A_4$ & $\frac{(\s^2 - 2\s\t - 2\t^2)^2(\s^4 + 2\s^3\t + 6\s^2\t^2 - 4\s\t^3 + 4\t^4)^2}{2^6\t^3(\s-\t)^3(\s^2+\s\t+\t^2)^3}$ \\ \hline
    $(2,3,4)$ & $S_4$ & $\frac{-(4\s\t)^2(\s^2-3\t^2)^2(\s^4+6\s^2\t^2+81\t^4)^2(3\s^4+2\s^2\t^2+3\t^4)^2}{(s^2+3\t^2)^4(s^4-18\s^2\t^2+9\t^4)^4}$ \\ \hline
    $(2,3,5)$ & $A_5$ & $\frac{-(3^4\s^{10}+2^8\t^{10})^2(3^8\s^{20}-2^73^{10}\s^{15}\t^{5}-2^{18}3^{10}\s^{10}\t^{10}+2^{12}3^{10}\s^5\t^{15}+2^{16}\t^{20})^2}{(12\s\t)^5(81\s^{10} - 1584\s^5\t^5 - 256\t^{10})^5}$ \\ \hline
\end{tabular}
\end{table}

\section{Counting rational points in the image of a rational function}
\label{sec:count-rat-points}
The results presented in this section are undoubtedly well known
(\cite[p.~133]{Serre97-LecturesMordellWeil}, \cite[Theorem
B.6.1]{Hindry&Silverman00}); however, authors often ignore the leading
constants we seek. For completeness, we provide full proofs, making
the leading constants explicit.
\begin{situation}
  \label{situation:rat-pts-bounded-height-image-rational-map}
  Throughout the remainder of this section, we shall work with the following
  notations.
  \begin{itemize}[leftmargin=*]
  \item Let $\phi\colon \Pone_\Q \to \Pone_\Q$ be a nonconstant $\Q$-morphism with $d \colonequals \deg(\phi)$.
  \item Let $\phi_0, \phi_\infty \in \Z[\s,\t]$ be a choice of
    relatively prime homogeneous polynomials of degree $d$ such that $\phi$
    is given by 
    \begin{equation*}
      \phi(s:t) = (\phi_0(s,t):\phi_\infty(s,t)).
    \end{equation*}
  \item Let $\mcV \colonequals \A^2 - \mathbf{0}$ be the punctured cone over
    $\Pone_\Z$. We identify $\mcV(\Z)$ with the set
    $\brk{(s,t) \in \Z^2: \gcd(s,t) = 1}$. The map
    $\mcV(\Z) \to \Pone(\Q)$ given by $(s,t) \mapsto (s:t)$ is two-to-one.
  \item Denote by $\tilde \phi\colon \A^2 \to \A^2$ the lift $\tilde\phi (s,t)
    \colonequals (\phi_0(s,t),\phi_\infty(s,t))$ of $\phi$.    
  \item On $\Pone(\Q) = \Pone(\Z)$, $\Ht\colon \Pone(\Q) \to \Z_{\geq 0}$ is
    the usual multiplicative height, given by $\Ht(Q) = \max\brk{|\num(Q)|,|\den(Q)|}$.
  \item $\Omega(\phi) \subset \Pone(\Q)$ is the image of $\phi(\Q)\colon
    \Pone(\Q) \to \Pone(\Q)$.
  \item For any $\Omega \subset \Pone(\Q)$ and for every $h > 0$,
    $\Omega_{\leq h}$ is the finite subset of $\Omega$ consisting of those points
    $Q$ with $\Ht(Q) \leq h$. The \cdef{counting function of
      $\Omega \subset \Pone(\Q)$} is denoted $N(\Omega;h) \colonequals \# \Omega_{\leq h}$.
  \item We denote by $\Aut(\phi)$ the group of $\Q$-automorphisms of the map
    $\phi$.
  \end{itemize}
\end{situation}

The main result of this section is the following.

\begin{proposition}
  \label{prop:N(Omega;h)-asymptotics}
  We have $N(\Omega(\phi);h) \asymp h^{2/d}$ as $h\to \infty$. More precisely, there exists an
  explicitly computable constant $\delta(\phi) > 0$ such that
  \begin{equation*}
    \tfrac1d \cdot \delta(\phi)\cdot h^{2/d} \leq N(\Omega(\phi));h) \leq  \delta(\phi)\cdot h^{2/d}, \quad \text{ as } h \to \infty.
  \end{equation*}
  The constant $\delta(\phi)$ is described in \Cref{constant-term}.
\end{proposition}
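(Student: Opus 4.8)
The plan is to count preimages on the punctured cone and then read off the image count from the elementary bound on fiber sizes. Set
\[
  T(h) \colonequals \#\brk{P \in \Pone(\Q) : \Ht(\phi(P)) \leq h}.
\]
Since $\phi(\Q)\colon \Pone(\Q) \to \Omega(\phi)$ is surjective with nonempty fibers of cardinality at most $d$, grouping the points counted by $T(h)$ according to their image gives $N(\Omega(\phi);h) \leq T(h) \leq d\cdot N(\Omega(\phi);h)$, that is $\tfrac1d T(h) \leq N(\Omega(\phi);h) \leq T(h)$. Everything thus reduces to the single asymptotic $T(h) \sim \delta(\phi)\, h^{\chi/2}$, from which the two displayed inequalities follow immediately (up to a factor $1+o(1)$).

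To evaluate $T(h)$ I would lift to the cone via the two-to-one map $\mcV(\Z)\to\Pone(\Q)$ of \Cref{situation:rat-pts-bounded-height-image-rational-map}, so that $T(h) = \tfrac12\,\#\brk{(s,t)\in\mcV(\Z) : \Ht(\phi(s:t))\leq h}$. For a primitive pair $(s,t)$ one has $\Ht(\phi(s:t)) = F(s,t)/g(s,t)$, where $F(s,t)\colonequals \max\brk{|\phi_0(s,t)|,|\phi_\infty(s,t)|}$ is homogeneous of degree $d$ and $g(s,t)\colonequals \gcd(\phi_0(s,t),\phi_\infty(s,t))$. Because $\phi_0$ and $\phi_\infty$ are coprime, the resultant $\Res(\phi_0,\phi_\infty)$ is a nonzero integer, $g(s,t)$ divides it, and whether $g(s,t)=e$ for a given divisor $e$ depends only on $(s,t)$ modulo a fixed power of $\Res(\phi_0,\phi_\infty)$. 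I would therefore split the primitive lattice points according to the (bounded) value of $g$ and count each class separately.

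The coprimality of $\phi_0,\phi_\infty$ also forces $F$ to be strictly positive away from the origin, so $\mcR\colonequals\brk{v\in\R^2 : F(v)\leq 1}$ is compact with piecewise real-analytic, hence rectifiable, boundary. Homogeneity gives $\brk{F\leq B} = B^{1/d}\mcR$, so on the locus $\brk{g=e}$ the relevant region is the dilate of $\mcR$ by $(he)^{1/d}$, of area $e^{2/d}\vol(\mcR)\,h^{2/d}$, which is exactly the growth order $h^{\chi/2}$ of the statement. A classical lattice-point estimate counts integer points in such a dilate with main term equal to the area and error $O(h^{1/d})$ governed by the length of $\partial\mcR$; imposing $\gcd(s,t)=1$ by M\"obius inversion introduces the factor $1/\zeta(2)=6/\pi^2$, and the congruence densities of the conditions $\brk{g=e}$ supply the remaining arithmetic factor. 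Collecting the two-to-one factor $\tfrac12$, the volume $\vol(\mcR)$, the primitivity factor $6/\pi^2$, and the local densities attached to $g$ produces an explicit $\delta(\phi)$ with $T(h)=\delta(\phi)\,h^{\chi/2}+O(h^{1/d})$, the error being of strictly lower order; this $\delta(\phi)$ is the constant recorded in \Cref{constant-term}.

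I expect the geometry to be routine and the arithmetic of the common factor $g(s,t)$ to be the real obstacle. Pinning down the leading constant exactly requires computing the local density of each congruence condition $\brk{g(s,t)=e}$ at the primes dividing $\Res(\phi_0,\phi_\infty)$, verifying that these assemble with the primitivity factor into a convergent product, and checking that the finitely many error terms still sum to $o(h^{\chi/2})$. By contrast, the factor-$d$ gap between the upper and lower bounds is not a deficiency of the method but an intrinsic feature of the non-Galois setting: a rational point of $\Omega(\phi)$ may have anywhere from $1$ to $d$ rational preimages, and the gap closes only when the fiber action is transitive, that is, in the geometrically Galois case.
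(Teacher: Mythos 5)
Your proposal is correct and follows essentially the same route as the paper: the fiber-size inequality $\tfrac1d N(h)\leq N(\Omega(\phi);h)\leq N(h)$, the lift to the punctured cone with the two-to-one factor $\tfrac12$, the partition by the gcd-defect $e\mid\Res(\phi_0,\phi_\infty)$, the Lipschitz/Davenport lattice-point count over the region $\brk{F\leq 1}$, and the M\"obius sieve contributing $6/\pi^2$, assembling into the same constant $\delta(\phi)=\frac{3}{\pi^2}\vol(\mcR_1)\sum_e\delta_e e^{2/d}$. Your observation that the condition $g(s,t)=e$ is a congruence condition modulo a fixed power of the resultant is in fact a slightly more explicit justification of the existence of the densities $\delta_e$ than the paper records.
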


In the special case where $\phi$ is geometrically Galois, we can keep track of
the exact number of $\Q$-rational points on each fiber
$\phi^{-1}(Q) \colonequals \Pone\times_\Q Q$, for all but finitely many
$Q \in \Omega(\phi)$. This allows us to promote the asymptotic bounds of
\Cref{prop:N(Omega;h)-asymptotics} to an asymptotic count.
\begin{corollary}
  \label{cor:counting}
  Suppose that $\phi$ is geometrically Galois. Then, there exists an explicitly
  computable constant $\kappa(\phi) \in \R_{>0}$ such that for every
  $\varepsilon > 0$,
  \[N(\Omega(\phi);h) = \kappa(\phi)\cdot h^{2/d} + O\paren{h^{1/d +
        \varepsilon}}\] as $h \to \infty$. Moreover, the leading constant is
  given by
  \begin{equation*}
    \kappa(\phi) = \delta(\phi)/\#\Aut(\phi),
  \end{equation*}
  and the implied constant depends on $\phi$ and $\varepsilon$.
\end{corollary}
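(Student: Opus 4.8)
The plan is to bootstrap \Cref{cor:counting} from \Cref{prop:N(Omega;h)-asymptotics}, upgrading the two-sided bound there to an exact asymptotic by using the Galois hypothesis to compute the number of rational points in a typical fiber of $\phi$. Throughout, write $\tilde N(h) \colonequals \#\brk{P \in \Pone(\Q) : \Ht(\phi(P)) \leq h}$ for the corresponding count in the source. The proof of \Cref{prop:N(Omega;h)-asymptotics} in fact establishes the sharp asymptotic
\[\tilde N(h) = \delta(\phi)\cdot h^{2/d} + O(h^{1/d + \varepsilon}),\]
with $\delta(\phi)$ as in \Cref{constant-term}: indeed $\tilde N(h)$ counts primitive lattice points $(s,t)$ in the homogeneously expanding region $\brk{\max(|\phi_0(s,t)|,|\phi_\infty(s,t)|) \leq h}$, and the two-sided estimate for $N(\Omega(\phi);h)$ is read off from it via the crude observation that each $Q \in \Omega(\phi)$ has between $1$ and $d$ rational preimages. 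The whole point of the corollary is that, when $\phi$ is geometrically Galois, this preimage count is generically exactly $\#\Aut(\phi)$.

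First I would pin down the generic fiber. Set $G \colonequals \Gal(\phi) = \Aut(\phi_{\Qbar})$; it acts simply transitively on every fiber of $\phi$ over a point outside the (finite) branch locus $B \subset \Pone(\Qbar)$, so that such a fiber over a rational point is a $G$-torsor over $\Spec\Q$. Fix $Q \in \Omega(\phi)$ with $Q \notin B$. By the definition of $\Omega(\phi)$ there is a rational point $P_0 \in \phi^{-1}(Q)(\Q)$, and every geometric point of the fiber is $g(P_0)$ for a unique $g \in G$. For $\sigma \in \Gal_\Q$ one computes
\[\sigma(g(P_0)) = {}^{\sigma}g(\sigma(P_0)) = {}^{\sigma}g(P_0),\]
using that $P_0$ is $\Q$-rational, where ${}^{\sigma}g$ denotes the Galois conjugate of the automorphism $g$. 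Since $G$ acts freely, $g(P_0)$ is $\Gal_\Q$-fixed if and only if ${}^{\sigma}g = g$ for all $\sigma$, that is, if and only if $g \in G^{\Gal_\Q} = \Aut(\phi)(\Q)$. Hence $\phi^{-1}(Q)(\Q)$ is exactly the orbit $\Aut(\phi)(\Q)\cdot P_0$, of cardinality $\#\Aut(\phi)$. Equivalently, $\phi(\Q)\colon \Pone(\Q) \to \Omega(\phi)$ is everywhere $\#\Aut(\phi)$-to-one except over the finitely many rational branch values, each of which has at most $d$ preimages.

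Next I would assemble the count. Summing fiber cardinalities over $Q \in \Omega(\phi)_{\leq h}$ and isolating the contribution of the finitely many source points lying over rational branch values gives
\[\tilde N(h) = \#\Aut(\phi)\cdot N(\Omega(\phi);h) + O(1).\]
Dividing by $\#\Aut(\phi)$ and inserting the asymptotic for $\tilde N(h)$ recorded above yields
\[N(\Omega(\phi);h) = \frac{\delta(\phi)}{\#\Aut(\phi)}\cdot h^{2/d} + O(h^{1/d + \varepsilon}),\]
which is the assertion with $\kappa(\phi) = \delta(\phi)/\#\Aut(\phi)$.

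The genuinely hard input — the power-saving error term $O(h^{1/d + \varepsilon})$ in the asymptotic for $\tilde N(h)$ — is inherited from the proof of \Cref{prop:N(Omega;h)-asymptotics}, where the $\varepsilon$ is the cost of M\"obius-sieving the primitivity condition $\gcd(s,t) = 1$. Within the corollary itself, the step demanding care is the descent computation of the generic fiber: one must work with the group $\Aut(\phi)(\Q) = G^{\Gal_\Q}$ of $\Q$-rational automorphisms, not the full geometric group $G$, since only a $\Q$-rational automorphism transports one rational point of a fiber to another. This is exactly the refinement that replaces the crude lower-bound factor $\tfrac1d$ of \Cref{prop:N(Omega;h)-asymptotics} by the exact constant $1/\#\Aut(\phi)$.
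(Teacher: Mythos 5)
Your proposal is correct and follows essentially the same route as the paper: both arguments reduce the corollary to the asymptotic for the source count $N(h)$ already established in the proof of \Cref{prop:N(Omega;h)-asymptotics}, and both show that the rational points of an unramified fiber containing a rational point form a single free orbit under $\Aut(\phi)=\Gal(\phi)^{\Gal_\Q}$, hence have cardinality $\#\Aut(\phi)$. Your version of the fiber computation (parametrizing the geometric fiber by the group via a chosen rational base point $P_0$ and checking which $g(P_0)$ are Galois-fixed) is just a reformulation of the paper's argument that the automorphism carrying one rational point to another must itself be $\Gal_\Q$-invariant.
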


\subsection{The primitivity defect set}
\label{sec:prim-defect-set}
Given $(s,t) \in \mcV(\Z)$, it does not follow that
$\tilde\phi(s,t) = (\phi_0(s,t), \phi_\infty(s,t))\in \mcV(\Z)$. For example,
consider the map
\[\tilde\phi(s,t) = ((s^2-t^2)^2, (s^2+t^2)^2)\] arising in the parametrization of
Pythagorean triples. When $s$ and $t$ have the same parity,
$\gcd \tilde\phi(s,t) = 4$. In general, $\tilde\phi\colon \mcV(\Z) \to \Z^2$ and
we have the following commutative diagram of sets.
\begin{equation*}
  \begin{tikzcd}
    \Pone(\Q) \arrow[dd, "\phi"'] & \mcV(\Z) \arrow[l] \arrow[dr, "\tilde\phi"] & \\
    & & \Z^2 \arrow[dl, "\cdot(1/\gcd)"]\\
    \Pone(\Q) & \mcV(\Z) \arrow[l]
  \end{tikzcd}
\end{equation*}

Define the \cdef{primitivity defect set of $\phi$} by
\begin{equation}
  \label{eq:primitivity-defect-set}
  \mathcal{D}(\phi) \colonequals \brk{\gcd \tilde\phi(s,t) : (s,t) \in \mcV(\Z)}.
\end{equation}
The set $\mathcal{D}(\phi)$ is finite. Indeed, let $R(\phi) \in \Z$ denote the resultant
of the homogeneous polynomials $\phi_0$ and $\phi_\infty$. Then, every
primitivity defect divides $R(\phi)$.
\begin{lemma}
  \label{lemma:primitivity-defect-set}
  If $e \in \mathcal{D}(\phi)$, then $e \mid R(\phi)$.
\end{lemma}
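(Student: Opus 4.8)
The plan is to exploit the classical fact that the resultant lies in the ideal generated by $\phi_0$ and $\phi_\infty$ after clearing a power of each variable. Concretely, since $\phi_0, \phi_\infty \in \Z[\s,\t]$ are homogeneous of degree $d$, there exist homogeneous polynomials $u_0, v_0, u_\infty, v_\infty \in \Z[\s,\t]$ of degree $d-1$, \emph{with integer coefficients}, such that
\begin{align*}
  u_0\,\phi_0 + v_0\,\phi_\infty &= R(\phi)\cdot \s^{2d-1}, \\
  u_\infty\,\phi_0 + v_\infty\,\phi_\infty &= R(\phi)\cdot \t^{2d-1}.
\end{align*}
These are the standard B\'ezout-type identities for the resultant; the integrality of the cofactors is not an extra hypothesis but falls out of the construction, since the identities are obtained from the Sylvester matrix of $\phi_0, \phi_\infty$ by Cramer's rule, an operation that involves only integer determinants of its (integer) submatrices.

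With these identities in hand, the argument is short. First I would fix $e \in \mathcal{D}(\phi)$ and, by definition of the primitivity defect set, choose a coprime pair $(s,t) \in \mcV(\Z)$ with $e = \gcd(\phi_0(s,t), \phi_\infty(s,t))$; in particular $e$ divides both $\phi_0(s,t)$ and $\phi_\infty(s,t)$. Specializing the two displayed identities at $(\s,\t) = (s,t)$ and using that the cofactors take integer values, the left-hand sides are each divisible by $e$. Hence $e \mid R(\phi)\cdot s^{2d-1}$ and $e \mid R(\phi)\cdot t^{2d-1}$.

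To conclude, I would pass to the greatest common divisor of these two divisibilities: since $\gcd(s,t) = 1$ forces $\gcd(s^{2d-1}, t^{2d-1}) = 1$, we obtain $e \mid \gcd\paren{R(\phi)\, s^{2d-1},\, R(\phi)\, t^{2d-1}} = R(\phi)$, which is exactly the claim. I do not anticipate a genuine obstacle here: the entire content is the resultant identity above, which is classical. The only point deserving a word of care is the integrality of the cofactors $u_0, v_0, u_\infty, v_\infty$ --- were they merely rational, specialization at $(s,t)$ would not preserve divisibility by $e$ --- but this is guaranteed by the Sylvester/Cramer description of the resultant.
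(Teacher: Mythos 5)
Your proof is correct, and it follows the same basic strategy as the paper's: express the resultant as an element of the ideal generated by $\phi_0$ and $\phi_\infty$ with integral cofactors, then specialize at the chosen $(s,t)$. In fact, your version is the more careful one. The paper's proof asserts an identity $R(\phi) = g_0\phi_0 + g_\infty\phi_\infty$ with $g_0, g_\infty \in \Z[\s,\t]$, which cannot hold literally: since $\phi_0$ and $\phi_\infty$ are homogeneous of positive degree, the right-hand side vanishes at the origin while $R(\phi) \neq 0$. The correct B\'ezout identities are exactly the ones you wrote, with the extra factors $\s^{2d-1}$ and $\t^{2d-1}$ (obtained by dehomogenizing, applying the Sylvester/Cramer identity, and rehomogenizing), and they force the additional step you supply --- using $\gcd(s,t)=1$ to pass from $e \mid R(\phi)s^{2d-1}$ and $e \mid R(\phi)t^{2d-1}$ to $e \mid R(\phi)$. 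This step is genuinely needed (the lemma is false without the hypothesis $(s,t) \in \mcV(\Z)$, i.e., without primitivity of $(s,t)$), and your write-up makes its role explicit where the paper's elides it. Your remark on the integrality of the cofactors is also exactly the right point to flag.
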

\begin{proof}
  Let $e \in \mathcal{D}(\phi)$. By definition, there exists
  $(s,t)\in \mcV(\Z)$ such that $\gcd \tilde\phi(s,t) = e$. By standard properties
  of the resultant, we can find polynomials $g_0,g_\infty \in \Z[\s,\t]$ such that
  \[ R(\phi) = g_0(\s,\t)\cdot \phi_0(\s,\t) +
    g_\infty(\s,\t)\cdot\phi_\infty(\s,\t).\]
  By evaluating the expression above at $(\s,\t) = (s,t)$, we see that $R(\phi)$
  is a multiple of $e$.
\end{proof}
For each $e \in \mathcal{D}(\phi)$, consider the set
  \begin{equation*}
    \mcV(\Z)_e \colonequals \brk{(s,t) \in \mcV(\Z) : \gcd\tilde\phi(s,t) = e}.
  \end{equation*}
  We have a partition
  \begin{equation}
    \label{eq:VZ-partition}
      \mcV(\Z) = \bigsqcup_{e\in \mathcal{D}(\phi)}  \mcV(\Z)_e.
    \end{equation}
      For each $e\in \mathcal{D}(\phi)$, consider the subsets
  \begin{equation*}
    \Z\cdot \mcV(\Z)_e \colonequals \brk{(ns,nt) : n\in \Z, (s,t)\in \mcV(\Z)_e}
    \subset \Z^2.
  \end{equation*}
  From the partition \Cref{fig:VZ-partition} of primitive points, we obtain the
  partition
  \begin{equation}
    \label{eq:ZZ-partition}
      \Z^2 = \bigsqcup_{e\in \mathcal{D}(\phi)}\Z\cdot \mcV(\Z)_e.
  \end{equation}

  \begin{figure}[ht]
    \centering
    \begin{subfigure}{0.49\textwidth} 
        \centering
        \includegraphics[width=\textwidth]{./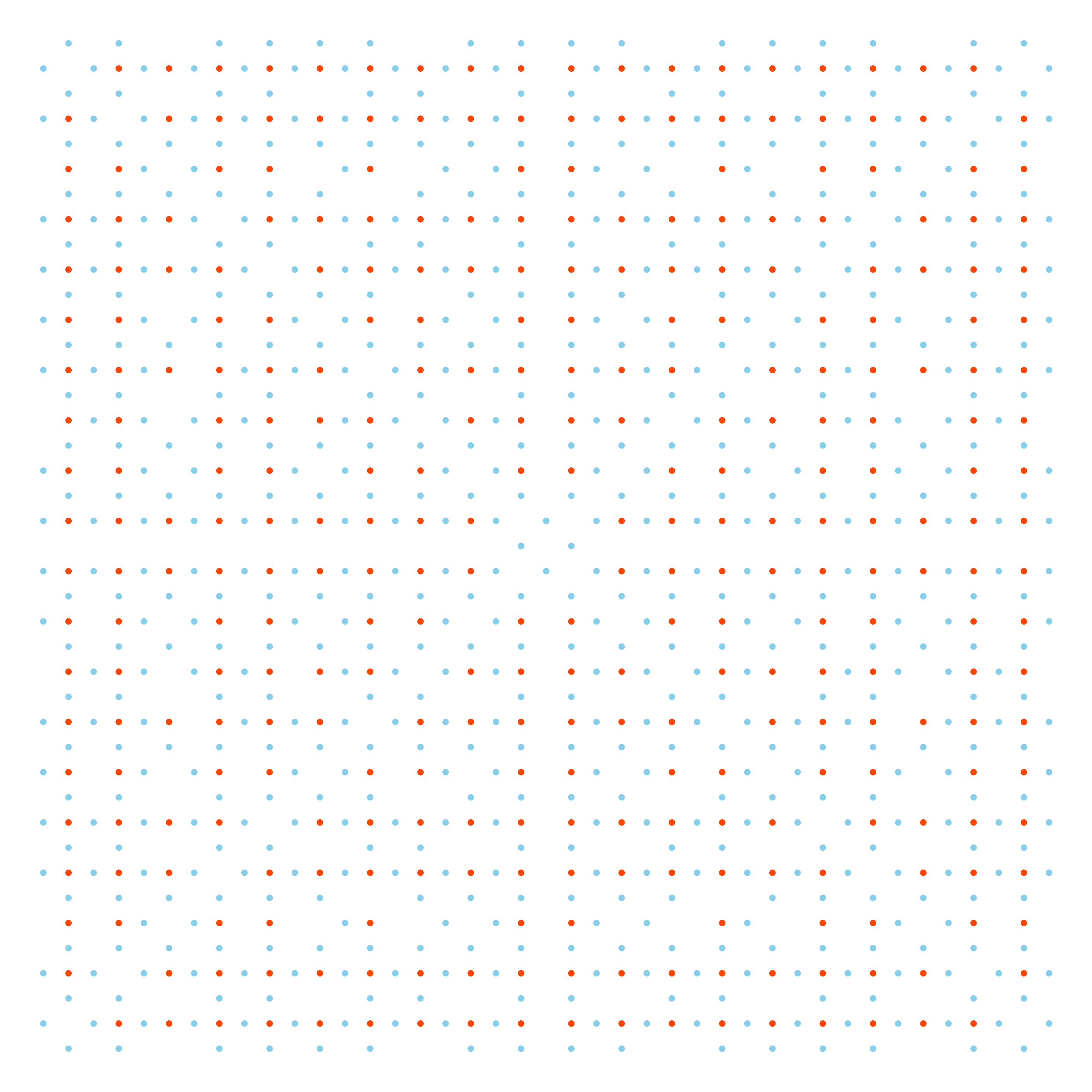}
    \end{subfigure}
    \hfill 
    \begin{subfigure}{0.49\textwidth}
        \centering
        \includegraphics[width=\textwidth]{./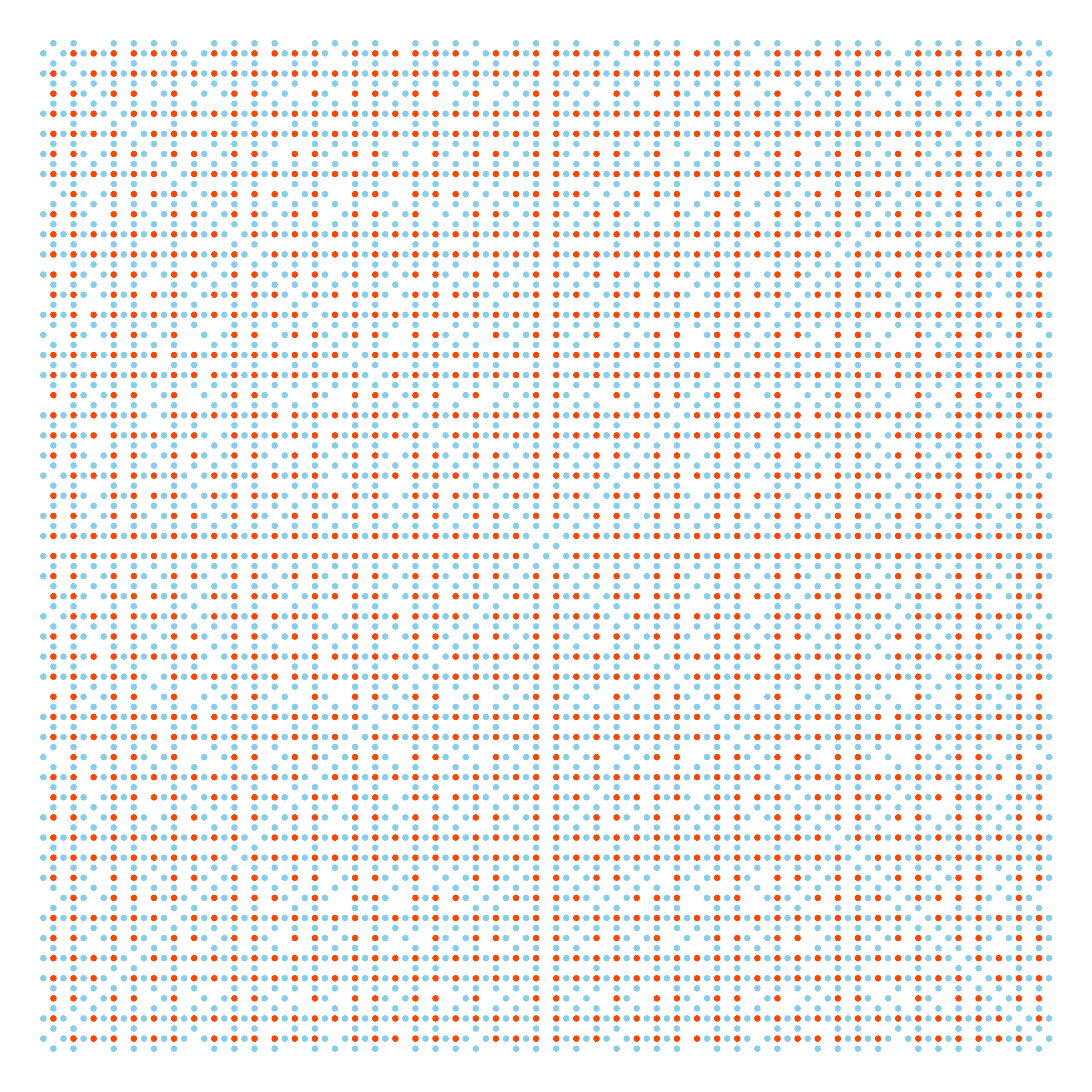}
    \end{subfigure}
    \caption{Partition $\mcV(\Z) = \mcV(\Z)_1 \sqcup \mcV(\Z)_4$ with respect
      to the Galois map $\phi(s:t) = ((s^2-t^2)^2:(s^2+t^2)^2)$, with primitivity
      defect set $\mathcal{D}(\phi) = \brk{1,4}$.}
    \label{fig:VZ-partition}
  \end{figure}

\subsection{Proof of \Cref{prop:N(Omega;h)-asymptotics} and \Cref{cor:counting}}
\label{sec:proofs} We start with the proof of the asymptotic bounds. We will abbreviate
\[\max \tilde\phi(s,t) \colonequals \max\brk{|\phi_0(s,t)|,
  |\phi_\infty(s,t)|}.\] 
\begin{proof}[Proof of \Cref{prop:N(Omega;h)-asymptotics}]

We may apply the principle of Lipschitz \cite{Davenport51} to obtain
  \begin{align}
    \label{eq:Lambda-e}
    \notag\widetilde M(h) \colonequals \#\brk{(s,t) \in \Z^2 : \max\tilde\phi(s,t)
    \leq h} \\
    = \vol(\mcR_1)\cdot h^{2/d} + O\left(h^{1/d}\right),
  \end{align}
  where $\vol\paren{\mcR_1}$ is the Lebesgue measure of the compact region $\mcR_1$
  in $\R^2$ given by $\max\brk{|\phi_0(s,t)|,|\phi_\infty(s,t)|} \leq 1$.

  In light of the partition \Cref{eq:ZZ-partition}, we see that for each
  $e\in \mathcal{D}(\phi)$ the set $\Z\cdot \mcV(\Z)_e$ has a density
  $\delta_e\in [0,1]$, and $\sum_{e\in \mathcal{D}(\phi)}\delta_e = 1$.
  Moreover, if we define
  \begin{equation*}
    \widetilde M_e(h) \colonequals \#\brk{(s,t) \in \Z\cdot \mcV(\Z)_e :
      \max\tilde\phi(s,t) \leq h},
  \end{equation*}
  then $\widetilde M_e(h) = \delta_e\cdot \widetilde M(h) + O(1)$.
  
  We apply a standard M\"obius sieve to \Cref{eq:Lambda-e} to obtain, for every
  $\varepsilon > 0$, the asymptotic
  \begin{align}
    \label{eq:Lambda-e-primitive}
    \notag \widetilde N(h) \colonequals \#\brk{(s,t) \in \mcV(\Z) : \max\tilde\phi(s,t)
    \leq h} \\
    = \frac{6}{\pi^2}\cdot\vol(\mcR_1)\cdot h^{2/d} + O_{e,\varepsilon}\left(h^{1/d+\varepsilon}\right).
  \end{align}
  Moreover, if we define
  \begin{equation*}
    \widetilde N_e(h) \colonequals \#\brk{(s,t) \in \mcV(\Z)_e :
      \max\tilde\phi(s,t) \leq h},
  \end{equation*}
  then $\widetilde N_e(h) = \delta_e\cdot \widetilde N(h) + O(1)$.  
  Consider the counting function
  \begin{equation*}
    N(h) \colonequals \#\brk{(s:t) \in \Pone(\Q) :
      \Ht(\phi(s:t)) \leq h},
  \end{equation*}
  which counts all $\Q$-rational points on $\Pone$ with respect to the height
  $\Ht$ pulled back by $\phi$. In general, we have the inequalities
  \begin{equation}
    \label{eq:4}
  \tfrac1d \cdot N(h) \leq   N(\Omega(\phi);h) \leq N(h),
  \end{equation}
  which arise from the fact that a point $Q = \phi(P) \in \Omega(\phi)$ has at
  least one rational point in the fiber $\phi^{-1}(Q)$, and at most
  $d = \deg \phi$.

  To conclude, we relate $N(h)$ to the counting functions
  $\widetilde N_e(h)$. By the definition of $\Ht$, we see that
  \begin{align}
    N(h) &= \frac12 \sum_{e\in\mathcal{D(\phi)}}\widetilde N_e(eh) \notag\\
    &= \frac12 \sum_{e \in \mathcal{D}(\phi)}\paren{
      \frac{6}{\pi^2}\cdot\vol(\mcR_1)\cdot\delta_e\cdot (eh)^{2/d} +
      O\left((eh)^{1/d+\varepsilon}\right)} \notag \\
    &= \frac{3}{\pi^2}\vol(\mcR_1)\paren{\sum_{e\in\mathcal{D}(\phi)} \delta_e\cdot e^{2/d}} \cdot h^{2/d} +
      O\left(h^{1/d+\varepsilon}\right).
  \end{align}
  In particular, the leading constant is
  \begin{equation}
    \label{constant-term}
    \delta(\phi)  = \frac{3}{\pi^2}\vol(\mcR_1)\paren{\sum_{e\in\mathcal{D}(\phi)} \delta_e\cdot e^{2/d}}.
  \end{equation}
\end{proof}

We will use \Cref{prop:N(Omega;h)-asymptotics} in the special case of a
geometrically Galois $\Q$-Belyi map $\phi$.
\begin{proof}[Proof of \Cref{cor:counting}]
  Suppose that $\phi$ is geometrically Galois, with Galois group
  $\Gal(\phi) = \Aut(\phi_\Qbar)$. Then, $\Gal(\phi)$ acts transitively and
  without stabilizers on the fibers of unramified points $Q\in \Pone(\Q)$.
  Since there are finitely many points that ramify, they do not influence the
  asymptotic count, so we ignore them. We claim that for every $Q \in
  \phi(\Pone(\Q)) = \Omega(\phi)$, we have that 
  \begin{equation*}
    \# \phi^{-1}(Q)(\QQ) = \#\Aut(\phi).
  \end{equation*}
  Indeed $\Aut(\phi) = \Aut(\phi_{\Qbar})^{\Gal_\Qbar}$, and for every
  $P \in \phi^{-1}(Q)(\QQ)$ and $\gamma\in \Aut(\phi)$, we have that
  $\gamma(P) \in\phi^{-1}(Q)(\QQ)$ as well. On the other hand, given
  $P,P' \in \phi^{-1}(Q)(\QQ)$, there exists $\gamma \in \Aut(\phi_\Qbar)$ such
  that $\gamma(P') = P$. For any $\sigma \in \Gal_\Q$, we see that
  $\gamma^\sigma(P') = \gamma(\sigma^{-1}P') = \gamma(P')$. Therefore,
  $\gamma^{-1}\gamma^\sigma$ stabilizes $P'$, which implies that
  $\gamma^{-1}\gamma^\sigma=1$, and therefore $\gamma \in \Aut(\phi)$. It
  follows that $N_\phi(h) = \#\Aut(\phi)\cdot N(\Omega(\phi);h)$, and the proof
  is complete.   In particular, the leading constant is
  \begin{equation}
    \label{constant-term-kappa}
    \kappa(\phi)  = \frac{3}{\pi^2}\dfrac{\vol(\mcR_1)}{\#\Aut(\phi)}\paren{\sum_{e\in\mathcal{D}(\phi)} \delta_e\cdot e^{2/d}}.
  \end{equation}
\end{proof}

\begin{example}[Pythagorean constant]
  \label{example:pythagorean-constant} In \Cref{sec:x2+y2-z2}, we concluded
  that for $F\colon \x^2 + \y^2 - \z^2 = 0$, we have the identity $\Omega(F) =
  \Omega(\phi)$, where $\phi\colon Z \colonequals \Proj \Q[\x,\y,\z]/(\x^2 + \y^2 - \z^2) \to
  \Pone_\Q$ is the Galois \belyi map $(x:y:z) \mapsto (x^2:z^2)$. Take the
  isomorphism $\Pone \cong Z$ given by $(s:t) \mapsto (s^2-t^2:2st:s^2+t^2)$, and rename
  $\phi$ to be the composition $\Pone \cong Z \to \Pone$, $(s:t) \mapsto
  ((s^2-t^2)^2: (s^2+t^2)^2)$.
  \begin{itemize}[leftmargin=*]
  \item Since $\max\brk{|s^2-t^2|^2, |s^2+t^2|^2} = (s^2+t^2)^2$, the region
    $\mcR_1$ is the unit disc, and $\vol(\mcR_1) = \pi$.
  \item The primitivity defect set $\mathcal{D}(\phi) = \brk{1,4}$. The
    densities are $\delta_1 = 2/3$ and $\delta_4 = 1/3$.
  \end{itemize}
  Putting this data into \Cref{constant-term}, we see that
  \begin{equation*}
    \delta(\phi) = \frac{3}{\pi^2}\cdot \pi\paren{\dfrac{2}{3} +
      \dfrac{4^{2/4}}{3}} = \dfrac{4}{\pi}.
  \end{equation*}
  Finally, since $\Aut(\phi) \cong G \cong C_2\times C_2$, we obtain $\kappa(\phi) =
  \delta(\phi)/4 = \tfrac{1}{\pi}$.
\end{example}

\section{Proof of main results}
\label{sec:proofs}

\begin{situation}
  \label{situation:proofs}
  We adopt the following notation for the rest of this section.
  \begin{itemize}[leftmargin=*]
  \item Let $\abc$ be a spherical signature (see
    \Cref{table:spherical-groups}), we do not assume that $a \leq b \leq c$.
  \item Let $\mcS$ denote a finite set of primes, and $R = \Z[\mcS^{-1}]$.
  \item Recall that $\HH^1_\mcS(\Q, G)$ denotes the Galois cohomology pointed
    set which classifies $G$-torsors over $\Spec \Q$ unramified outside of $\mcS$.
  \item For any $\Omega \subset \Pone(\Q)$, and any $h > 0$, we have the
    counting function $N(\Omega; h)$ defined in
    \Cref{situation:rat-pts-bounded-height-image-rational-map}.
  \end{itemize}
\end{situation}

Our proof follows the guidelines of the method of Fermat descent, as presented
in \cite{SAP-fermat-descent}. It consists on three steps: covering, twisting, and sieving.

\subsection{Covering} The covering is a geometrically Galois
$\Q$-\belyi map $\phi\colon \Pone_\Q \to \Pone_\Q$ with signature $\abc$. For instance we
can always start with one of the maps described by the rational functions in
\Cref{table:belyi-maps} and, since we are not assuming that $a \leq b \leq c$,
compose with an appropriate permutation $\gamma \in \PGL_2(\Q)$ of $\brk{0,1,\infty}$.
\subsection{Twisting} By \cite[Lemma 3.23]{SAP-fermat-descent}, there exists a finite
set of primes $\mcS$ for which the map $\phi$ admits an $R$-model
$\Phi\colon \Pone_R \to \Pone_R$ such that
$\Pone\abc_R \cong [\Pone_R/\Autsch(\Phi)]$. Descent theory gives the partition
  \begin{align*}
    \Pone\abc\ideal{R} &= \bigsqcup_{\tau\in \HH^1(R, \Autsch(\Phi))}
                         \Phi_\tau(\Pone_\tau(R)) \\
                       & = \bigsqcup_{\tau \in \HH^1_\mcS(\Q,\Gal(\phi))} \phi_\tau(\Pone_\tau(\Q)).
  \end{align*}
  Here, $\HH^1(R,\Autsch(\Phi))$ denotes the fppf \v{C}ech cohomology pointed
  set. It is in bijection with isomorphism classes of fppf
  $\Autsch(\Phi)$-torsor schemes $T \to \Spec R$. Restriction to the generic
  fiber induces an isomorphism
  \begin{equation*}
    \HH^1(R,\Autsch(\Phi)) \cong \HH^1_\mcS(\Q,\Gal(\phi))
\end{equation*}
of pointed sets. Note that
$\Gal(\phi) \cong \Autsch(\phi)(\Qbar) = \Aut(\phi_\Qbar)$, so the action of
the absolute Galois group $\Gal_\Q$ is the natural one. In general,
$\HH^1_\mcS(\Q,\Gal(\phi))$ is only a pointed set and not a group, since
$\Gal(\phi) \cong \tribar\abc$ as abstract groups, and the only abelian
spherical triangle group is $\tribar(2,2,2) \cong C_2 \times C_2$. Crucially,
the set $\HH^1_\mcS(\Q, \Gal(\phi))$ is finite, and classifies twists of the
\belyi map $\phi$. It is worth noting that in some cases, the source curve of a
twist $\phi_\tau\colon \Pone_\tau \to \Pone$ might be a pointless conic.
Nevertheless, since the equations
\begin{align*}
  \x^2 + \y^2 - \z^c &= 0, \quad (c \geq 2) \\
  \x^2 + \y^3 - \z^3 &= 0,\\
  \x^2 + \y^3 - \z^4 &= 0,\\
    \x^2 + \y^3 - \z^5 &= 0,
\end{align*}
all have primitive integral solutions, we know that $\Omega\abc \neq \emptyset$,
and there will always be at least one twist for which $\Pone_\tau(\Q)
\neq \emptyset$.

\subsection{Sieving} Combining the partition above with
\Cref{cor:counting}, we obtain
  \begin{equation*}
    N(\Omega_\mcS\abc; h) = \sum_\tau N(\Omega(\phi_\tau);h),
  \end{equation*}
  where the sum ranges over all the $\tau\in \HH^1_\mcS(\Q, \Gal(\phi))$ for which
  $\Pone_\tau$ is isomorphic to $\Pone_\Q$. To sieve out the excess of elements in
  $\Pone\abc\ideal{R}$ not corresponding to points in
  $\Omega\abc = \Pone\abc\ideal{\Z}$, we show that we can restrict to certain
  subsets $T(F) \subset T\abc \subset \HH^1_\mcS(\Q,\Gal(\phi))$ to cover all of
  $\Omega(F)$ and $\Omega\abc$. The proofs of both \Cref{thm:main-belyi} and
  \Cref{thm:main-fermat} (in the special case of simplified equations
  (\Cref{def:fermat-coefficients})) will follow immediately from the following
  lemma.

  \begin{lemma}
    \label{lemma:simplified-partitions}
    Fix a possibly empty subset $\mcT \subset \mcS$. Take a $\mcT$-simplified
    Fermat equation $F\colon \gfe = 0$. Then, there is a finite subset
    $T(F) \subseteq \HH^1_\mcS(\Q,\Gal(\phi))$ such that
    \begin{equation}
      \label{eq:Omega-F-partition}
      \Omega(F) = \bigsqcup_{\tau \in T(F)}\phi_\tau(\Pone_\tau(\Q)).
    \end{equation}
    Moreover, defining $T\abc$ as the disjoint union of the sets $T(F)$, as $F$
    ranges over all $\emptyset$-simplified Fermat equations of signature $\abc$,
    we have
    \begin{equation}
          \label{eq:Omega-abc-partition}
          \Omega\abc = \bigsqcup_{\tau \in T\abc}\phi_\tau(\Pone_\tau(\Q)).
     \end{equation}
   \end{lemma}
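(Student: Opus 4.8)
The plan is to derive both partitions from the descent partition
\[
\Omega_\mcS\abc = \bigsqcup_{\tau \in \HH^1_\mcS(\Q, \Gal(\phi))} \phi_\tau(\Pone_\tau(\Q))
\]
established in the Twisting step, organized through the $\mcS$-simplified coefficient map $\coeff\colon \Omega_\mcS\abc \to \Z^3$ of \Cref{def:fermat-coefficients}. The structural input I would isolate first is that $\coeff$ is \emph{constant on each descent piece} $\phi_\tau(\Pone_\tau(\Q))$: a point $Q$ lies in the piece indexed by $\tau$ precisely when $\tau$ is the class of the $\Gal(\phi)$-torsor $\phi^{-1}(Q)$, and the Fermat descent dictionary of \cite{SAP-fermat-descent} translates this cohomological datum into the arithmetic datum $\coeff(Q)$. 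Granting this, every fiber $\coeff^{-1}(\sigma)$ is a union of whole pieces, so it suffices to realize $\Omega(F)$ and $\Omega\abc$ as unions of fibers of $\coeff$.

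For \Cref{eq:Omega-F-partition}, fix a $\mcT$-simplified $F$ with coefficients $(A,B,C)$ and $\mcT \subseteq \mcS$. For $Q = j(x,y,z) \in \Omega(F)$ with $(x,y,z)$ primitive, the identity $-\num(Q)+\num(Q-1)+\den(Q)=0$ gives $\num(Q) = -Ax^a/g$, $\num(Q-1) = By^b/g$, $\den(Q) = Cz^c/g$ with $g = \gcd(Ax^a, By^b, Cz^c)$, and primitivity forces every prime dividing $g$ to lie in $\mcT$. Since $A,B,C,g$ are then $\mcS$-units, $\num(Q)R = x^aR$, $\num(Q-1)R = y^bR$, $\den(Q)R = z^cR$ are $a^\th$, $b^\th$, $c^\th$ powers in $R$, whence $\Omega(F) \subseteq \Pone\abc\ideal{R} = \Omega_\mcS\abc$. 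A local computation at the finitely many primes of $\mcT$ then identifies $\coeff(Q)$, as $Q$ ranges over $\Omega(F)$, with a finite set $\Sigma(F)$ of $\mcS$-simplified triples, and shows conversely that reintroducing the defect $g$ exhibits each point of $\coeff^{-1}(\Sigma(F))$ as the image of a primitive solution of $F$. Thus $\Omega(F) = \coeff^{-1}(\Sigma(F))$ is a union of whole pieces, and $T(F) \colonequals \{\tau : \coeff \equiv \sigma \text{ on } \phi_\tau(\Pone_\tau(\Q)) \text{ for some } \sigma \in \Sigma(F)\}$, a finite subset of the finite set $\HH^1_\mcS(\Q, \Gal(\phi))$, is the desired index set.

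For \Cref{eq:Omega-abc-partition}, I would identify $\Omega\abc$ inside $\Omega_\mcS\abc$ as the locus where $\coeff$ is $\emptyset$-simplified, i.e.\ where $\coeff(Q) \in \{1\}\times\{\pm1\}\times\{\pm1\}$; indeed $Q \in \Omega\abc$ exactly when $\num(Q)$, $\num(Q-1)$, $\den(Q)$ are honest $a^\th$, $b^\th$, $c^\th$ powers in $\Z$. For an $\emptyset$-simplified $F$ the primitivity defect is trivial ($g=1$, again by primitivity), so $\Omega(F)$ is the single fiber of $\coeff$ over the coefficient triple of $F$, and distinct $\emptyset$-simplified equations yield distinct and hence disjoint fibers whose union is all of $\Omega\abc$. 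Applying the previous paragraph with $\mcT = \emptyset$ and setting $T\abc = \bigsqcup_F T(F)$ — a disjoint union, since the $T(F)$ are separated by their constant $\coeff$-values — then yields the claim. The three points $0,1,\infty$ fall into finitely many pieces and are accounted for separately.

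I expect the main obstacle to be the first structural input, that $\coeff$ factors through the twist class $\tau$: this is exactly the bridge between the geometry of the twisted covers $\phi_\tau$ and the arithmetic of simplified coefficients, and it is where the descent machinery of \cite{SAP-fermat-descent} carries the load. The secondary difficulty is the local analysis of the primitivity defect $g$ at the primes of $\mcT$ needed to determine $\Sigma(F)$; this is elementary valuation bookkeeping, and it is the only place where the distinction between a general $\mcT$-simplified equation and an $\emptyset$-simplified one genuinely intervenes.
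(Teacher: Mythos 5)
Your proposal follows essentially the same route as the paper's proof: both start from the descent partition of $\Omega_\mcS\abc$, observe that the $\mcS$-simplified coefficient triple $\coeff$ is constant on each twist's image $\phi_\tau(\Pone_\tau(\Q))$ (the paper phrases this as $\coeff(\phi_\tau) = \coeff(\phi_\tau(Q))$), and define $T(F)$ by matching coefficient triples. The only substantive difference is that you allow a finite set $\Sigma(F)$ of triples to absorb the primitivity defect $g$ at the primes of $\mcT$, whereas the paper pins $T(F)$ to the single triple $\coeff(\phi_\tau) = (A,B,C)$ --- a point on which your version is, if anything, the more careful one.
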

   \begin{proof}
     Any geometrically Galois $\Q$-\belyi map
     $\phi\colon \Pone_\Q \to \Pone_\Q$ of signature $\abc$ is given
     by a rational function
     \begin{equation*}
       \dfrac{\phi_0}{\phi_\infty} = 1 + \dfrac{\phi_1}{\phi_\infty} \in \kk(\Pone_\Q),
     \end{equation*}
     where
     \begin{enumerate}[leftmargin=*, label=(\roman*)]
     \item $\phi_0, \phi_1,\phi_\infty\in \Z[\s,\t]$ are homogeneous of degree $\#\tribar\abc$,
     \item $\gcd(\phi_0,\phi_\infty) = \gcd(\phi_1, \phi_\infty) = 1$, and
     \item we can write
       \begin{align*}
         \phi_0(\s,\t) &= C_0\cdot X(\s,\t)^a, \\
         \phi_1(\s,\t) &= C_1\cdot Y(\s,\t)^b,\\
         \phi_\infty(\s,\t) &= C_\infty\cdot Z(\s,\t)^c,
       \end{align*}
       for unique polynomials $X,Y,Z \in \Z[\s,\t]$, and a unique triple
       $(C_0,C_1,C_\infty)$ of $\mcS(\phi)$-simplified Fermat coefficients,
       where $\mcS(\phi)$ is an explicit set of bad primes.
     \end{enumerate}
     We denote this triple by $\coeff(\phi)$. Observe that for any
     $Q\in \Pone(\Q)$, we have that $\coeff(\phi) = \coeff(\phi(Q))$.

     Returning to the situation of this section, to each cohomology class
     $\tau$ we can associate the $\mcS$-simplified Fermat coefficient triple
     $ \coeff(\phi_\tau)$. If $F$ is $\mcT$-simplified, then it is also
     $\mcS$-simplified. Moreover, for every primitive integral solution
     $(x,y,z)$ to $F$, the point $j(x,y,z) \in \Pone(\Q)$ is in
     $\Omega_\mcS\abc$. Define
     \begin{equation*}
       T(F) \colonequals \brk{\tau \in \HH^1_\mcS(\Q,\Gal(\phi)) :
         \coeff(\phi_\tau) = (A,B,C)}.
     \end{equation*}
   \end{proof}

   To finish the proof of \Cref{thm:main-fermat}, we must consider the case of
   non-simple equations. To guide our intuition, consider the equation
   $F'\colon 25\x^2 + \y^2 = \z^2$. Our strategy is to use the
   \emph{simplification} $F \colon \x^2 + \y^2 = \z^2$ to deduce the asymptotic
   result for $F'$ from that of $F$. In this case, the $\Q$-isomorphism of nice
   curves $C \to C', \quad (x:y:z) \mapsto (x/5:y:z)$ enables this translation.
   The idea is that the congruence condition $x \equiv 0 \md 5$ cuts out a positive
   proportion of the primitive integral solutions to the Pythagorean equation,
   and only the constant term in the asymptotic will change.

   Start with a non-simple equation $F'\colon A'\x^a + B'\y^b + C'\z^c = 0$.
   Without loss of generality, we may assume that $\gcd(A',B',C') = 1$. In this
   case, we can write
   \begin{equation*}
     A' = A\cdot A_0^a, \quad B' = B\cdot B_1^b, \quad C' = C\cdot C_\infty^c,
   \end{equation*}
   to obtain a $\mcT$-simplified coefficient triple $(A,B,C)$, where $\mcT$ is
   the set of primes dividing $A'\cdot B'\cdot C'$. The Fermat equation
   $F\colon \gfe = 0$ is the \cdef{simplification} of $F'$. From
   \Cref{lemma:simplified-partitions}, we have a partition
   \begin{equation}
     \label{eq:Omega-F-partition}
     \Omega(F) = \bigsqcup_{\tau \in T(F)} \Omega(\phi_\tau),
   \end{equation}
   where each $\phi_\tau$ is a geometrically Galois $\Q$-\belyi map
   $\Pone_\Q \to \Pone_\Q$ of signature $\abc$. Let $\phi$ be one of these maps. We
   have seen that $\phi$ corresponds to a rational function
   \begin{equation*}
     \phi = \dfrac{A\cdot X(\s,\t)^a}{C\cdot Z(\s,\t)^c} = 1 + \dfrac{B\cdot
       Y(\s,\t)^b}{C\cdot Z(\s,\t)^c}.
   \end{equation*}
   To conclude, we use a clever argument of Beukers \cite[Proof of Theorem
   1.5]{Beukers98}. Consider the polynomial map
   \begin{equation}
     \label{eq:alpha}
     \alpha\colon \Q^2 \to \Q^3, \quad (s,t) \mapsto \paren{\dfrac{X(s,t)}{A_0},
       \dfrac{Y(s,t)}{B_1}, \dfrac{Z(s,t)}{C_\infty}}.
   \end{equation}
   We use $\alpha$ to define a lattice of rank two generated by the points whose image is integral
   \begin{equation*}
     \Lambda(\alpha) \colonequals \mathrm{Span}_\Z\brk{(s,t)\in \Q^2 :
       \alpha(s,t)\in \Z^3}.
   \end{equation*}
   Choose an integral basis $\brk{\vec\alpha_1, \vec\alpha_2}$ for
   $\Lambda(\alpha)$, and define
   \begin{equation*}
     \phi' = \dfrac{A\cdot X(\s\vec\alpha_1 + \t\vec\alpha_2)^a}{C\cdot Z(\s\vec\alpha_1 + \t\vec\alpha_2)^c} = 1 + \dfrac{B\cdot
       Y(\s\vec\alpha_1 + \t\vec\alpha_2)^b}{C\cdot Z(\s\vec\alpha_1 + \t\vec\alpha_2)^c}.
   \end{equation*}
   Applying this construction to every $\phi_\tau$ appearing in
   \Cref{eq:Omega-F-partition}, we obtain the partition
   \begin{equation*}
       \Omega(F') = \bigsqcup_{\tau \in T(F)} \Omega(\phi'_\tau),
     \end{equation*}
     from which we conclude the proof.

\bibliographystyle{amsalpha}

\providecommand{\bysame}{\leavevmode\hbox to3em{\hrulefill}\thinspace}
\providecommand{\MR}{\relax\ifhmode\unskip\space\fi MR }
\providecommand{\MRhref}[2]{%
  \href{http://www.ams.org/mathscinet-getitem?mr=#1}{#2}
}
\providecommand{\href}[2]{#2}

\end{document}